\documentclass[11pt]{amsart}
\usepackage{enumitem}
\usepackage{amsmath,amsthm,amssymb,mathrsfs,amsfonts,verbatim,enumitem,color,leftidx,mathtools}
\usepackage{mathabx}
\usepackage{etoolbox} 
\usepackage{bbm}
\usepackage[all,tips]{xy}
\usepackage{graphicx,ifpdf}
\usepackage{stmaryrd}
\usepackage{amssymb}
\usepackage{dsfont}
\usepackage{float}

\ifpdf
   \DeclareGraphicsRule{*}{mps}{*}{}
\fi
\usepackage[colorlinks]{hyperref}
\hypersetup{
linkcolor=blue,          
citecolor=green,        
}
\usepackage{cleveref}

\usepackage{tikz}
\usetikzlibrary{calc}
\usetikzlibrary{shapes.misc}

\tikzset{cross/.style={cross out, draw=black, minimum size=2*(#1-\pgflinewidth), inner sep=0pt, outer sep=0pt},
cross/.default={1pt}}

\newtheorem{thm}{Theorem}[section]
\newtheorem{lem}[thm]{Lemma}
\newtheorem{cor}[thm]{Corollary}
\newtheorem{prop}[thm]{Proposition}

\newtheorem{conj}[thm]{Conjecture}

\theoremstyle{definition}
\newtheorem{defi}[thm]{Definition}

\newtheorem{remark}[thm]{Remark}

\theoremstyle{remark}

\numberwithin{equation}{section}

\definecolor{esperance}{rgb}{0.0,0.5,0.0}

\newcommand{\bw}{\mathbf{w}}

\newcommand{\N}{\mathbb{N}}
\newcommand{\Z}{\mathbb{Z}}

\newcommand{\sm}{\smallsetminus}

\newcommand{\al}{\alpha}

\newcommand{\Ga}{\Gamma}
\newcommand{\del}{\delta}

\newcommand{\Lam}{\Lambda}
\newcommand{\eps}{\epsilon}

\newcommand{\Om}{\Omega}
\newcommand{\vphi}{\varphi}

\newcommand{\bP}{\mathbb{P}}
\newcommand{\bR}{\mathbb{R}}
\newcommand{\bZ}{\mathbb{Z}}

\newcommand{\bN}{\mathbb{N}}

\newcommand{\SL}{\operatorname{SL}}
\newcommand{\SO}{\operatorname{SO}}

\newcommand\set[1]{\left\{#1\right\}}
\newcommand\pa[1]{\left(#1\right)}

\newcommand\av[1]{|#1|}
\newcommand\on[1]{\operatorname{#1}}

\newcommand\br[1]{\left[#1\right]}

\newcommand{\wstar}{\overset{\on{w}^*}{\lra}}

\newcommand{\defn}{\overset{\on{def}}{=}}

\newcommand{\lra}{\longrightarrow}

\newcommand{\onto}{\xymatrix{\ar@{>>}[r]&}}

\newcommand{\cfe}{c.f.e }

\newcommand{\cavg}[1]{\del_{#1}^{[0,2\log q]}}

\newcommand{\eqlabel}[2]
{
\begin{equation}
{#2}\label{#1}
\end{equation}
}

\newcommand\numberthis{\addtocounter{equation}{1}\tag{\theequation}}

\newcommand\mathitem{\item\leavevmode\vspace*{-0.75\dimexpr\baselineskip+\abovedisplayskip\relax}}

\begin{document}

\title[Continued fractions statistics of random rationals]{On the rate of 
convergence of continued fraction statistics of random rationals}

\date{}

\author{Ofir David}
\address{Department of Mathematics, Technion, Haifa, Israel {\it eofirdavid@gmail.com }}
\author{Taehyeong Kim}
\address{The Einstein Institute of Mathematics, Edmond J. Safra Campus, Givat Ram, The Hebrew University of Jerusalem, Jerusalem, 91904, Israel 
{\it taehyeong.kim@mail.huji.ac.il}}
\author{Ron Mor}
\address{The Einstein Institute of Mathematics, Edmond J. Safra Campus, Givat Ram, The Hebrew University of Jerusalem, Jerusalem, 91904, Israel {\it ron.mor@mail.huji.ac.il}}
\author{Uri Shapira}
\address{Department of Mathematics, Technion, Haifa, Israel 
{\it ushapira@technion.ac.il}}

\thanks{}

\keywords{}

\def\thefootnote{}
\footnote{2020 {\it Mathematics
Subject Classification}: Primary 37A35; Secondary 37A44, 11J70.}
\def\thefootnote{\arabic{footnote}}
\setcounter{footnote}{0}

\begin{abstract}
We show that the statistics of the continued fraction expansion of a randomly chosen rational in the unit interval, with a fixed large denominator $q$, approaches the Gauss-Kuzmin statistics with polynomial rate in $q$. 
This improves on previous results giving the convergence without rate. 
As an application of this effective rate of convergence, we show that the statistics of a randomly chosen rational in the unit interval, with a fixed large denominator $q$ and \textit{prime} numerator, also approaches the Gauss-Kuzmin statistics.
Our results are obtained as applications of improved non-escape of mass and equidistribution statements for the geodesic flow on the space $\SL_2(\bR)/\SL_2(\bZ)$.
\end{abstract}

\maketitle
\section{Introduction}
In this article we wish to revisit some of the results proved in \cite{DS} and strengthen them. The main novel observation in the current article is that one of the two assumptions appearing in the main theorems of \cite{DS} follows from the other and hence can be dropped. We then add to this by presenting a new arithmetic application of the stronger result.

\subsection{Background and the main application}
It is well known since ancient times that a real number $x\in(0,1)$ admits a \textit{continued fraction expansion} (abbreviated hereafter as c.f.e); namely, there are positive integers 
$a_1,a_2,\dots$ such that 
$$x = \frac{1}{
a_1 +\frac{1}{
a_2+\frac{1}{\ddots}
}
}.$$
In fact, to be more precise, irrational $x$'s correspond bijectively to infinite sequences $(a_n)\in \bN^\bN$, where the above equation is interpreted as a limit, and rational $x$'s correspond to finite sequences of positive integers, where here the correspondence is not 
$1$-$1$ but almost so: each rational 
$x\in (0,1)$ can be written in exactly two ways, 
$$x = \frac{1}{
a_1 +\frac{1}{
a_2+\frac{1}{\ddots+\frac{1}{a_n}} 
}
} \textrm{ or } 
x = \frac{1}{
a_1 +\frac{1}{
a_2+\frac{1}{\ddots+\frac{1}{(a_n-1) + \frac{1}{1}}}
}
}
$$
where $n\in \bN$ and $a_n\ge2$.
As usual, we abbreviate and write $x = \br{a_1,a_2,\dots}$ when $x$ is irrational and 
$x = \br{a_1,\dots, a_n}$ when $x$ is rational. As a convention we will always use the shorter expansion for rationals, which does not end with $1$.
The length of the \cfe of a rational number is defined by 
\begin{equation}\label{eq: length of cfe}
\on{len}([a_1,\ldots,a_n])\defn n.
\end{equation}

Our discussion will revolve around the continued fraction expansions of rationals with large denominators, but to motivate the phenomena discussed it is actually more natural to start with irrationals. 

It is well known (see for example the introduction of \cite{AS}) that given any finite string 
of positive integers $\bw = (w_1,\dots, w_k)\in \bN^k$, for Lebesgue almost any irrational $x\in (0,1)$, the
asymptotic frequency of appearance of $\bw$ in the \cfe of $x$ exists and is independent of $x$. That is, the limit 
\begin{equation}\label{eq: asymptotic density of w}
D_\bw \defn \lim_{n\to\infty} \frac{\#\set{1\le \ell \le n: \bw = (a_\ell,a_{\ell+1},\dots, a_{\ell+k-1})}}{n}
\end{equation}
exists and is independent of $x$, as the notation suggests (in fact $D_{\bw}$ equals $\frac{1}{\log 2}\int_{I_\bw} \frac{1}{1+t}dt$ where $I_\bw$ is a certain interval in $(0,1)$, see Remark~\ref{remark:gauss_measure}). For example, for a randomly chosen $x$, the asymptotic density of appearance of $1$'s in its \cfe is $2-\frac{\log3}{\log2} \sim 0.415$ with probability $1$.

A similar density could be defined for rationals: if $x = \br{a_1,\dots, a_n}$ then we can define
$$D_\bw(x) \defn \frac{\#\set{1\le \ell \le n-k+1: \bw = (a_\ell,a_{\ell+1},\dots, a_{\ell+k-1})}}{n-k+1}.$$
In \cite{DS} the authors investigated the densities $D_\bw(j/q)$ of rationals $j/q$ in reduced form for large denominators $q$, as well as the length of the \cfe of such rationals.
In particular, the following result is proved.
\begin{thm}[\cite{DS}]\label{thm 1}
For any $q\in\bN$, let $j/q\in (0,1)$ be a random rational with denominator $q$ and numerator $j$ co-prime to $q$ chosen uniformly (namely, one picks $j$ randomly and uniformly from the $\vphi(q)$ possible numerators co-prime to $q$). Then for any $k\in \bN$, any string $\bw\in \bN^k$, and any $\eps>0$, we have 
\begin{enumerate}
    \mathitem \[\bP(\av{D_\bw(j/q) - D_\bw}>\eps)\to 0 \textrm{ as }q\to \infty,\]
    \mathitem \[\bP\pa{ \av{\frac{\on{len}(j/q)}{\log q} - \frac{12\log 2}{\pi^2}} >\eps}\to 0 \textrm{ as }q\to \infty.\]
\end{enumerate}
\end{thm}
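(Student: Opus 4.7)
The plan is to convert the problem into an equidistribution statement on $X \defn \SL_2(\bR)/\SL_2(\bZ)$. Write $u_x \defn \smallmat{1 & x \\ 0 & 1}$ and $a_t \defn \diag{e^{t/2}, e^{-t/2}}$. The standard dynamical dictionary for the modular surface identifies the Gauss map with the return map of the geodesic flow to a Poincar\'e section transverse to the cusp: the partial quotients of the c.f.e.~of $x \in (0,1)$ correspond to the successive section-returns of the geodesic ray through $u_x\Gamma$, with the $k$-th partial quotient equal to the depth of the $k$-th cusp excursion. For $x = j/q$ in lowest terms this ray exits the cusp after time $T(j/q) \asymp \log q$, along the way recording the full c.f.e.~of $j/q$. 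Hence $D_\bw(j/q)$ is a Birkhoff average $\tfrac{1}{T(j/q)}\int_0^{T(j/q)} f_\bw(a_s u_{j/q}\Gamma)\, ds$ for an indicator $f_\bw$ on the section satisfying $\int_X f_\bw\, dm_X = D_\bw$ (since the Gauss measure is the conditional of the Haar probability $m_X$ on the section), and $\on{len}(j/q)/\log q$ is a Birkhoff average of a return-counting function whose Haar integral equals $\frac{12\log 2}{\pi^2}$ by the Levy--Khinchin formula.

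The theorem therefore reduces to showing that the empirical measures
\[ \mu_q \defn \frac{1}{\vphi(q)} \sum_{\gcd(j,q)=1} \frac{1}{T(j/q)}\int_0^{T(j/q)} \delta_{a_s u_{j/q}\Gamma}\, ds \]
converge weak-$*$ to $m_X$, robustly enough to integrate the unbounded length observable. For convergence on $C_c(X)$, the points $\{a_{\log q} u_{j/q}\Gamma : 0 \le j < q\}$ lie equi-spaced on a bounded-length closed horocycle through the identity coset; Sarnak-type equidistribution handles the un-sieved set and a M\"obius sieve restores $\gcd(j,q)=1$. Averaging along the geodesic flow then uses mixing to transfer equidistribution from initial points to orbit segments.

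The principal obstacle is non-escape of mass, since the length counter is unbounded near the cusp and even $\int f_\bw\, d\mu_q$ requires uniform control on cusp excursions to pass to the limit. This needs quantitative Dani--Margulis-type non-divergence for the relevant horocycle data, combined crucially with $\gcd(j,q) = 1$ to rule out rationals with atypically short expansions. This is precisely the step the present paper strengthens relative to \cite{DS}. Once non-escape is secured, both (1) and (2) follow by integrating $f_\bw$ and the length counter against $m_X$; a second-moment variance bound---itself an equidistribution statement on $X \times X$ for the pairs $(j_1,j_2)/q$---upgrades convergence in mean to convergence in probability.
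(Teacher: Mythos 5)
Your opening reduction---encoding $D_\bw(j/q)$ and $\on{len}(j/q)/\log q$ as Birkhoff averages along the geodesic segment through $u_{j/q}\Gamma$ and reducing the theorem to equidistribution of the measures $\mu_q$---is the same starting point as in \cite{DS} (quoted here as Lemma~\ref{lem: DS}). The first genuine gap is in how you propose to prove that equidistribution. The points $u_{j/q}x_0$ have spacing $1/q$ along the period-one closed horocycle, so comparing the discrete average at time $t$ with the pushed closed horocycle (Sarnak-type equidistribution plus a M\"obius sieve) is only legitimate while $e^t/q\to 0$, i.e.\ for $t\le(1-\eps)\log q$; but the c.f.e.\ of $j/q$ is read off the segment $t\in[0,2\log q]$, and for $t>\log q$ the atoms are separated by more than the expansion scale, so no mixing/thickening argument transfers equidistribution to such singular data. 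That second half of the orbit is exactly where the difficulty lies, and it is why the paper and \cite{DS} proceed by completely different means: Bowen-ball separation and entropy arguments (Theorem~\ref{Thm_DS}) together with the covering bound of Theorem~\ref{Thm_KKLM} for non-escape of mass, combined with the time-reversal duality of Lemma~\ref{lem:step1} (the map $p\mapsto p'$ with $pp'\equiv-1 \bmod q$) to pass from $[0,\log q]$ to $[0,2\log q]$. For the full coprime set your route could in principle be repaired by invoking that same duality, since $(\bZ/q\bZ)^\times$ is invariant under it, but you never do; and for the same spacing reason, ``quantitative Dani--Margulis non-divergence'' does not by itself give non-escape for these atomic averages at times $t>\log q$ --- establishing non-escape is the entire content of Section 2 of the paper, not a routine citation.

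The second gap is the upgrade from convergence in mean to convergence in probability. The variance you need is not controlled by pair statistics of distinct numerators $(j_1,j_2)$ (those only reproduce the square of the mean); it requires showing that $\frac{1}{\varphi(q)}\sum_j \frac{1}{T^2}\int_0^T\!\!\int_0^T f(a_su_{j/q}x_0)\,f(a_tu_{j/q}x_0)\,ds\,dt \to \mu_X(f)^2$, i.e.\ decorrelation of two times along the \emph{same} divergent orbit on average over $j$, an equidistribution statement on $X\times X$ that you assert but do not prove and that does not follow from the tools you cite; it is essentially as strong as the individual equidistribution you are after. The paper avoids second moments entirely: Proposition~\ref{lem:step2} passes from convergence of the averaged measure to convergence along a proportion-one set of numerators using only that each $\delta_{p/q}^{[0,2\log q]}$ is a long orbit average (so all weak* limits are $a_1$-invariant) together with ergodicity of $\mu_X$, and the probabilistic statements then follow by the soft contradiction argument in the proof of Theorem~\ref{thm 3}. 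Replacing your Chebyshev step by such an ergodicity/extremality argument, and supplying the duality and a genuine non-escape proof as above, would be needed to make the proposal complete.
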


In this paper, we are able to improve this result, and show that not only the probability in Theorem~\ref{thm 1} converges to zero, but it does so at a polynomial rate.
\begin{thm}\label{thm 1 exp}
    For any $q\in \bN$, let $j/q\in(0,1)$
    be a random rational with denominator $q$ and numerator $j$ co-prime to $q$ chosen uniformly. Then, for any
    $k\in \bN$, any string $\bw\in \bN^k$, and any $\eps>0$, there exist $\alpha_{\epsilon}>0$ and $\alpha_{\epsilon,\bw}>0$ such that  for all $q$ large enough:
    \begin{enumerate}
        \mathitem \[\bP(\av{D_\bw(j/q) - D_\bw}>\eps)<q^{-\alpha_{\epsilon,\bw}},\]
        \mathitem\label{item:1.2.2} \[\bP\pa{ \av{\frac{\on{len}(j/q)}{\log q} - \frac{12\log 2}{\pi^2}} >\eps}<q^{-\alpha_{\epsilon}}.\]
    \end{enumerate}
\end{thm}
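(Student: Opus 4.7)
The plan is to encode the continued fraction statistics of $j/q$ as Birkhoff-type averages for the geodesic flow on $X_2\defn \SL_2(\bR)/\SL_2(\bZ)$ and invoke effective equidistribution, with polynomial rate in $q$, of the orbits coming from reduced fractions with denominator $q$. To each reduced $j/q$ associate a point $x_{j/q}=g_{j/q}\SL_2(\bZ)\in X_2$ chosen so that pushing by the geodesic flow $a_t=\diag{e^{t/2},e^{-t/2}}$ up to time $T_q=2\log q+O(1)$ traces out, via the standard Poincar\'e section encoding the Gauss map, exactly the CFE of $j/q$. Under this coding, $D_\bw(j/q)$ equals, up to an $O(k/\on{len}(j/q))$ boundary error, the time average $\frac{1}{T_q}\int_0^{T_q}\mathbf{1}_{A_\bw}(a_t x_{j/q})\,dt$ of the indicator of an explicit cylinder set $A_\bw\subset X_2$, while $\on{len}(j/q)/\log q$ equals, up to $O(1/\log q)$, the normalized number of crossings of the section.

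The points $\{x_{j/q}:j\in(\bZ/q\bZ)^\times\}$ all lie on a single closed horocycle of length $\asymp q$. Effective equidistribution of the $a_{T_q}$-translate of this horocycle at a polynomial rate in $q$, tested against bounded functions supported on a fixed compact piece of $X_2$, follows from the spectral gap for $L^2(X_2)$ via a standard Fourier/wave-packet argument in the horocycle parameter. Applied to $\mathbf{1}_{A_\bw}$, this yields part (1) with polynomial rate $q^{-\alpha_{\ep,\bw}}$, since $A_\bw$ can be chosen to live in a fixed compact subset of $X_2$ and the dependence on $\bw$ enters only through the diameter and Lipschitz data of $A_\bw$.

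Part (2) is more delicate because the observable governing the return count to the section is unbounded near the cusp: a large CFE digit corresponds to a deep cusp excursion. The key novel input is an effective non-escape of mass estimate — for some $\alpha=\alpha(\ep)>0$, the fraction of $j\in(\bZ/q\bZ)^\times$ whose orbit $\{a_t x_{j/q}:t\in[0,T_q]\}$ spends more than an $\ep$-fraction of its time above height $H$ in the cusp decays like $H^{-\alpha}+q^{-\alpha}$, uniformly in $H$. Given such a bound, one truncates the return-time function at a large height $H=H(\ep)$, handles the bounded piece by the effective equidistribution of the previous paragraph, and controls the tail by the non-escape estimate; both contributions are $O(q^{-\alpha_\ep})$, which yields (2) after a Chebyshev-type inequality.

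The main obstacle is precisely this polynomial-rate non-escape bound: it must hold uniformly over \emph{every} starting point on the rational horocycle, not merely on average, since the length statistic is sensitive to any individual $j$ whose orbit hides deep in the cusp for an anomalously long interval of time. I would approach it via a quantitative Margulis-type recurrence argument adapted to the arithmetic initial condition $x_{j/q}$, using the fact that deep cusp excursions of $a_t x_{j/q}$ are forced by divisibility properties of $j$ modulo small primes and hence can be controlled by a polynomial-in-$q$ counting estimate. This is exactly the step that allows one to drop the extraneous auxiliary assumption appearing in the main theorems of \cite{DS}, as announced in the introduction, and to upgrade the qualitative convergence of Theorem~\ref{thm 1} to the polynomial rates of Theorem~\ref{thm 1 exp}.
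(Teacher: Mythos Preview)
Your dynamical encoding of $D_\bw(j/q)$ and $\on{len}(j/q)/\log q$ via geodesic orbits is correct and matches the paper's use of Lemma~\ref{lem: DS}, but your route to the polynomial rate is quite different from the paper's and has real gaps. The paper proves \emph{no} effective equidistribution and no quantitative recurrence with explicit error terms. Instead it first establishes the purely qualitative Theorem~\ref{thm 3} (convergence for any subsets $\Lambda_q\subset(\bZ/q\bZ)^\times$ with $\log|\Lambda_q|/\log q\to 1$, the new input being the soft non-escape bound of Theorem~\ref{Thm_nonescape} obtained from the covering estimate of~\cite{KKLM}), and then deduces Theorem~\ref{thm 1 exp} by a one-line contradiction: if $\text{Bad}_\eps(q)=\{j:|D_\bw(j/q)-D_\bw|>\eps\}$ satisfied $\log|\text{Bad}_\eps(q)|/\log q\to 1$ along a subsequence, apply Theorem~\ref{thm 3} with $\Lambda_q=\text{Bad}_\eps(q)$ to force most $j\in\text{Bad}_\eps(q)$ to be not bad. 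So the polynomial rate is a formal, non-constructive consequence of the qualitative statement; your approach, if it could be made to work, would yield explicit $\alpha_\eps$, which the paper's method cannot.

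There is a genuine gap already in your treatment of part~(1). Effective equidistribution of the $a_t$-pushed horocycle through the points $x_{j/q}$ controls the \emph{average} $\frac{1}{\vphi(q)}\sum_j D_\bw(j/q)$ with polynomial rate --- a first-moment statement --- whereas the theorem asks for a bound on the \emph{fraction} of $j$ with large deviation, which does not follow from the first moment alone; you would need at least a second-moment/variance estimate (e.g.\ via a double time integral controlled by effective mixing), and you do not provide one. For part~(2), the mechanism you propose for effective non-escape is incorrect: a deep cusp excursion of $a_t x_{j/q}$ near time $t$ means that $j/q$ is well approximated by some rational $m/n$ with $n\ll e^{t/2}$ (equivalently, that $j/q$ has a large CFE digit at the corresponding stage), and this is governed by the Diophantine properties of $j/q$, not by divisibility of $j$ by small primes.
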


    Our methods do not provide any estimate of the values $\alpha_{\eps}$ and $\alpha_{\eps,\bw}$ in Theorem~\ref{thm 1 exp}, only their existence. It would be interesting to find out how fast do these probabilities must decay, and in particular the dependence of these powers on $\eps$.

Theorem~\ref{thm 1 exp} is in fact equivalent to the following generalization of Theorem~\ref{thm 1}. 
We show that the probability for the \cfe of $j/q$ to `behave badly' (in the sense of Theorem~\ref{thm 1}) approaches $0$, even if we choose $j$ uniformly at random from a set $\Lam_{q}\subseteq (\Z/q\Z)^{\times}$ of smaller size $q^{-\alpha_{q}}\varphi(q)$ with $\alpha_{q}\to 0$, instead of choosing from $(\Z/q\Z)^{\times}$.
Here and throughout this paper, we identify between $(\Z/q\Z)^{\times}$ and the set $\set{1\le j\le q:\ \gcd(j,q)=1}$.
\begin{thm}\label{thm 3}
    For any $q\in \bN$, let $\Lam_q\subseteq (\bZ/q\bZ)^{\times}$ so that $\underset{q\to\infty}{\lim} \frac{\log \av{\Lam_q}}{\log q} = 1$, and let $j/q$
    be a random rational with denominator $q$ and numerator $j\in \Lam_q$ chosen uniformly (we denote this probability measure over $\Lam_{q}$ by $\bP|_{\Lam_q}$). Then, for any
    $k\in \bN$, any string $\bw\in \bN^k$, and any $\eps>0$, we have
    \begin{enumerate}
        \mathitem \[\bP|_{\Lam_q}(\av{D_\bw(j/q) - D_\bw}>\eps)\to 0 \textrm{ as }q\to \infty,\]
        \mathitem \[\bP|_{\Lam_q}\pa{ \av{\frac{\on{len}(j/q)}{\log q} - \frac{12\log 2}{\pi^2}} >\eps}\to 0 \textrm{ as }q\to \infty.\]
    \end{enumerate}
\end{thm}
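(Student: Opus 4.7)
My plan is to deduce Theorem~\ref{thm 3} directly from Theorem~\ref{thm 1 exp}; the two statements are equivalent, and the only substantive input is the polynomial rate already encoded in Theorem~\ref{thm 1 exp}.

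For the main direction, fix $k \in \bN$, $\bw \in \bN^k$, $\eps > 0$, and set
$$B_q \defn \set{j \in (\bZ/q\bZ)^\times : \av{D_\bw(j/q) - D_\bw} > \eps}.$$
Theorem~\ref{thm 1 exp}(1) supplies $\alpha = \alpha_{\eps,\bw} > 0$ with $\av{B_q} \leq q^{-\alpha}\varphi(q)$ for all large $q$, while the hypothesis $\log\av{\Lam_q}/\log q \to 1$ forces $\av{\Lam_q} \geq q^{1-\alpha/2}$ eventually. Combining these with $\varphi(q) \leq q$,
$$\bP|_{\Lam_q}\pa{\av{D_\bw(j/q) - D_\bw} > \eps} \;\leq\; \frac{\av{B_q}}{\av{\Lam_q}} \;\leq\; \frac{q^{-\alpha}\cdot q}{q^{1-\alpha/2}} \;=\; q^{-\alpha/2} \;\longrightarrow\; 0,$$
which proves item (1); item (2) is identical, with $\alpha_\eps$ and the analogous bad set for the length statistic.

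For the reverse implication I would argue by contrapositive. If Theorem~\ref{thm 1 exp} failed for some $\eps, \bw$, there would exist a subsequence $q_n \to \infty$ with $\av{B_{q_n}} \geq q_n^{-\alpha_n}\varphi(q_n)$ and $\alpha_n \to 0$. Since $\log\varphi(q)/\log q \to 1$ (via $\varphi(q) \gg q/\log\log q$), setting $\Lam_{q_n} \defn B_{q_n}$ along this subsequence and $\Lam_q \defn (\bZ/q\bZ)^\times$ otherwise produces a family with $\log\av{\Lam_q}/\log q \to 1$, yet $\bP|_{\Lam_{q_n}}(\av{D_\bw(j/q_n) - D_\bw} > \eps) = 1$, contradicting Theorem~\ref{thm 3}.

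No real obstacle arises at this stage: the argument is Markov's inequality plus an elementary lower bound on $\varphi$. All of the analytic difficulty, namely polynomial non-escape of mass and effective equidistribution on $\SL_2(\bR)/\SL_2(\bZ)$, is encapsulated in Theorem~\ref{thm 1 exp} itself.
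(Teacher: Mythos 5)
Your argument has a genuine circularity problem: you deduce Theorem~\ref{thm 3} from Theorem~\ref{thm 1 exp}, but in this paper Theorem~\ref{thm 1 exp} is not proved independently --- it is itself deduced from Theorem~\ref{thm 3}, by exactly the contrapositive construction you sketch in your ``reverse implication'' paragraph (take $\Lam_{q_n}=\text{Bad}_\eps(q_n)$ along a subsequence where the bad set is large, and $(\bZ/q\bZ)^\times$ otherwise). There is no separate effective equidistribution statement anywhere in the paper from which the polynomial rate of Theorem~\ref{thm 1 exp} could be extracted first; that rate is obtained only a posteriori by this soft contradiction argument, with Theorem~\ref{thm 3} as input. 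So your Markov-type computation $|B_q|/|\Lam_q|\le q^{-\alpha}\cdot q/q^{1-\alpha/2}\to 0$ correctly establishes the implication ``Theorem~\ref{thm 1 exp} implies Theorem~\ref{thm 3}'' (this is the easy direction of the equivalence, which the paper indeed leaves to the reader), but it does not prove Theorem~\ref{thm 3}: you are assuming the statement that the paper derives from it.

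What is actually needed, and what your proposal omits entirely, is the dynamical content. The paper's proof runs as follows: the non-escape of mass bound (Theorem~\ref{Thm_nonescape}) combined with Theorem~\ref{Thm_DS} gives Corollary~\ref{cor: main}, i.e.\ $\del_{\Lam_q}^{[0,2\log q]}\wstar\mu_X$ whenever $\log|\Lam_q|/\log q\to1$; then Proposition~\ref{lem:step2}, using ergodicity of $\mu_X$ under $a_1$ and the uniform almost invariance of the orbit averages, upgrades this averaged statement to an individual one: there are subsets $\hat\Lam_q\subseteq\Lam_q$ with $|\hat\Lam_q|/|\Lam_q|\to1$ such that $\del_{p_q/q}^{[0,2\log q]}\wstar\mu_X$ for every choice $p_q\in\hat\Lam_q$; finally Lemma~\ref{lem: DS} translates this into convergence of $\on{len}(p_q/q)/\log q$ and of $D_\bw(p_q/q)$, and a short contradiction argument over the bad proportion finishes. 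If you want to keep your plan you must either reproduce this chain or supply an independent, genuinely effective proof of Theorem~\ref{thm 1 exp}; the latter is not available from the results stated in the paper and would be a substantially harder undertaking.
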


As an application, we deduce the following \textit{prime numerator} version of Theorem~\ref{thm 1}. It follows from the prime number theorem in combination with Theorem~\ref{thm 3}. 
\begin{thm}\label{thm 2}
For any $q\in\bN$, let $p/q\in (0,1)$
    be a random rational with denominator $q$ and prime numerator $p$ co-prime to $q$ chosen uniformly. Then for any $k\in \bN$, any string $\bw\in \bN^k$, and any $\eps>0$, we have
    \begin{enumerate}
        \mathitem \[\bP|_{\text{primes}}(\av{D_\bw(p/q) - D_\bw}>\eps)\to 0 \textrm{ as }q\to \infty,\]
        \mathitem \[\bP|_{\text{primes}}\pa{ \av{\frac{\on{len}(p/q)}{\log q} - \frac{12\log 2}{\pi^2}} >\eps}\to 0 \textrm{ as }q\to \infty.\]
    \end{enumerate}
\end{thm}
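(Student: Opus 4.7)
The plan is to deduce Theorem~\ref{thm 2} directly from Theorem~\ref{thm 3} by taking $\Lam_q$ to be the set of primes less than or equal to $q$ that are coprime to $q$, and verifying that this family satisfies the size hypothesis of Theorem~\ref{thm 3}. Explicitly, I would set
\[
\Lam_q \defn \set{p \textrm{ prime}\,:\, 1\le p\le q,\ \gcd(p,q)=1} \subseteq (\bZ/q\bZ)^\times,
\]
so that $\bP|_{\text{primes}}$ is exactly the uniform measure $\bP|_{\Lam_q}$ in the notation of Theorem~\ref{thm 3}.

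Next I would estimate $\av{\Lam_q}$. The number of primes at most $q$ is $\pi(q)$, and the only ones excluded from $\Lam_q$ are the prime divisors of $q$, of which there are at most $\omega(q) \le \log_2 q$. Hence
\[
\pi(q) - \log_2 q \;\le\; \av{\Lam_q} \;\le\; \pi(q).
\]
By the prime number theorem, $\pi(q) \sim q/\log q$, so $\log \av{\Lam_q} = \log q - \log\log q + o(1)$, and therefore
\[
\lim_{q\to\infty} \frac{\log \av{\Lam_q}}{\log q} = 1.
\]
This is exactly the hypothesis of Theorem~\ref{thm 3}. Applying that theorem to this particular family $\Lam_q$ then yields both conclusions of Theorem~\ref{thm 2} immediately.

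The only obstacle is confirming that the exclusion of the prime divisors of $q$ does not ruin the density hypothesis; as observed, this is harmless because $\omega(q)$ grows at most logarithmically. Note also that this argument uses Theorem~\ref{thm 3} in a crucial way, since the set of primes up to $q$ has size roughly $q/\log q$, which is much smaller than $\varphi(q)$, so Theorem~\ref{thm 1} alone would not suffice: it is precisely the flexibility of Theorem~\ref{thm 3} in allowing sparse subsets $\Lam_q$ (of size $q^{1-o(1)}$) that makes the prime-numerator version accessible. Thus the whole proof amounts to a one-line invocation of the prime number theorem combined with Theorem~\ref{thm 3}.
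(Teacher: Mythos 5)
Your proposal is correct and matches the paper's own proof: the paper likewise takes $\Lam_q$ to be the primes in $(\bZ/q\bZ)^\times$, bounds the excluded prime divisors of $q$ by $\log q$, uses the prime number theorem to get $\frac{\log|\Lam_q|}{\log q}\to 1$, and then applies Theorem~\ref{thm 3}. No gaps.
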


    Related questions regarding the distribution of the coefficients in continued fractions of random rationals, and particularly the expected length of the expansion, were previously discussed in the literature. 
    In~\cite{Heilbronn1969length}, Heilbronn computed the asymptotics of the average length over $p\in(\Z/q\Z)^{\times}$, also giving an error term. This error term was improved by Porter~\cite{Porter1975Heilbronn} and then further improved by Ustinov~\cite{Ustinov2008Solutions}.

    Other works considered the length question on different sets. In~\cite{bykovski2005dispersion}, Bykovski considered for any $q$ the average length for the \cfe of $\frac{p}{q}$, over all $p$ not necessarily coprime to $q$.    
    Others considered a larger probability space, comprised of all pairs $(p,q)$ where $p$ is coprime to $q$ and $q\leq Q$ for some $Q$. 
    On this space, Dixon~\cite{dixon1970number} showed a large deviation result, namely bounded the probability that $\on{len}(p/q)$ deviates from the expected value by a certain amount. This result was later improved by Hensely~\cite{hensley1994number}. Furthermore, in~\cite{baladi2005} Baladi and Vall{\'e}e consider a central limit theorem on this space.
    In comparison with these works, we consider too a large deviation question, but on a significantly smaller probability space where the denominator is considered fixed. Furthermore, we also consider the distribution of words in the \cfe expansions rather than only its length. 

    For construction of normal numbers with respect to the \cfe using rational numbers, see~\cite{adler1981construction,vandehey2016construction}.

\subsection{The diagonal flow and the main result}
The proof of Theorem~\ref{thm 3} will be given in \S\ref{sec: proof of theorem 3} and is based on the dynamical results we now present.

We start with the setting in which our mathematical discussion will take place, namely
the space of unimodular lattices in Euclidean plane. 
We will use similar notation to \cite{DS}. 
Let $G=\SL_2(\bR)$, $\Ga=\SL_2(\bZ)$, $X=G/\Ga$, and let $x_0\in X$ denote the identity coset.
For a point $x\in X$ we let $\del_x$ denote the probability measure supported on the singleton $\set{x}$.
Consider the diagonal flow
$$
a_t = \begin{pmatrix} e^{t/2} & 0 \\ 0 & e^{-t/2} \end{pmatrix} \in G.
$$
The unstable horospherical subgroup $U$ of $G$ with respect to $a_t$ is given by
\[
U=\left\{u_s=\begin{pmatrix} 1 & s \\ 0 & 1 \end{pmatrix} : s\in \bR\right\}.
\]
For a given set $\Lam_q \subseteq (\bZ/q\bZ)^{\times}$ denote 
\begin{equation}\label{eq: discrete avg}
\del_{\Lam_q}^{[0, 2\log q ]} = \frac{1}{\av{\Lam_q}}\sum_{p\in \Lam_q} \frac{1}{2\log q } \int_0^{2\log q} \del_{a_{t}u_{p/q}x_0}dt,
\end{equation}
that is, $\del_{\Lam_q}^{[0, 2\log q ]}$ is the probability measure given by the continuous average in the diagonal direction on the segment $[0, 2\log q]$ of the discrete counting probability measure on the points $\{u_{p/q}x_0 : p\in\Lambda_q\}$.  
For an illustration of the geodesic segments on which we average, see Figure~\ref{fig: geodesic_average} and the discussion before it. 
Recall that we say that a sequence of probability measures $(\mu_n)_{n\in\N}$ on $X$ \textit{does not exhibit escape of mass} if any weak* 
limit of it is a probability measure.
The following is one of the main results of \cite{DS}.
\begin{thm}\cite[Theorem 1.7]{DS}\label{Thm_DS}
Let $\Lam_q \subseteq (\bZ/q\bZ)^{\times}$ be subsets such that
\begin{enumerate}
    \item $\underset{q\to\infty}{\lim} \frac{\log |\Lam_q|}{\log q}=1$,
    \item the sequence of measures $\del_{\Lam_q}^{[0,2\log q]}$ does not exhibit escape of mass.
\end{enumerate}
Then $\del_{\Lam_q}^{[0,2\log q]} \wstar \mu_X$, where $\mu_X$ is the Haar probability measure on $X$.
\end{thm}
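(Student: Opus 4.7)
By Banach--Alaoglu together with the no-escape-of-mass hypothesis, it suffices to show that every weak-$*$ subsequential limit $\mu$ of $\del_{\Lam_q}^{[0,2\log q]}$ is the Haar measure $\mu_X$. My plan follows the standard three-step entropy strategy: first, $\mu$ is invariant under the diagonal flow $\{a_t\}$; second, $h_\mu(a_1)\geq 1$; third, Haar is the unique $a_1$-invariant probability measure on $X$ of this entropy. Invariance is immediate from the continuous $a_t$-averaging: for any fixed $t_0$, the signed measure $(a_{t_0})_*\del_{\Lam_q}^{[0,2\log q]}-\del_{\Lam_q}^{[0,2\log q]}$ has total variation $O(|t_0|/\log q)\to 0$, because the integrals over $[0,2\log q]$ and $[t_0,2\log q+t_0]$ differ only on intervals of length $|t_0|$ at the endpoints.

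The technical heart is the entropy lower bound. The geometric picture is as follows: the $|\Lam_q|$ starting points $\{u_{p/q}x_0\}$ lie on the periodic horocycle $Ux_0\cong \R/\Z$ at discreteness scale $\sim 1/q$. The $a_t$-flow expands $U$ by the factor $e^t$, so for $t\in[0,\log q]$ the translated points are still piled modulo the horocycle period, while for $t\in[\log q,2\log q]$ distinct rationals $p/q$ land in essentially disjoint $U$-cells and can be individually resolved. The plan is then to fix a countable measurable partition $\cP$ of $X$ adapted to local $(U^-,a_t,U)$-product coordinates, estimate the $\del_{\Lam_q}^{[0,2\log q]}$-mass of the atoms of the refined partition $\bigvee_{k=0}^{n-1}a_{-k}\cP$ for $n$ comparable to $2\log q$, and pass to the limit to obtain a lower bound on $h_\mu(a_1,\cP)$. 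A short calculation along these lines gives entropy at least $\tfrac{\log|\Lam_q|}{\log q}$ times the top Lyapunov exponent, which by hypothesis (1) tends to $1$. I expect this to be the main obstacle: it requires arithmetic input to guarantee that orbits starting at distinct $p/q$ do not remain dynamically close for longer than the $U$-separation would suggest, and it crucially uses the no-escape-of-mass assumption to rule out the cusp region absorbing a definite fraction of the mass (where the partition argument would otherwise lose entropy).

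Finally, the Margulis--Ruelle inequality on $X$ gives $h_\nu(a_1)\leq 1$ for every $a_1$-invariant probability measure $\nu$, since the only positive Lyapunov direction is $U$ with exponent $1$. Equality forces absolute continuity along unstable manifolds by Ledrappier's characterization of Pesin's entropy formula, which on $\SL_2(\R)/\SL_2(\Z)$ identifies $\nu$ with the Haar measure. Hence $\mu=\mu_X$, and since every weak-$*$ subsequential limit equals $\mu_X$, the no-escape hypothesis upgrades this to $\del_{\Lam_q}^{[0,2\log q]}\wstar \mu_X$.
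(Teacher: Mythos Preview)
This theorem is not proved in the present paper: it is quoted as \cite[Theorem~1.7]{DS}, and the current paper's contribution is to remove hypothesis~(2) by proving the non-escape-of-mass result (Theorem~\ref{Thm_nonescape}). So there is no ``paper's own proof'' to compare against here.

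That said, your sketch is the correct strategy and matches the approach of \cite{DS}. The invariance and the maximal-entropy rigidity steps are exactly as you describe. For the entropy lower bound, the ``arithmetic input'' you anticipate is precisely \cite[Lemma~2.10]{DS} (also invoked in \S\ref{sec: escape} of the present paper): distinct points $u_{p/q}x_0$ with $p\in(\bZ/q\bZ)^\times$ cannot lie in the same forward Bowen $(\lfloor\log q\rfloor,\eta)$-ball for small $\eta$, which is an elementary computation using that $\gcd(p,q)=1$. This separation, combined with a standard Brin--Katok/partition counting argument and the no-escape assumption (to handle the non-compactness), gives $h_\mu(a_1)\ge \liminf \tfrac{\log|\Lambda_q|}{2\log q}\cdot 2 = 1$; the factor $2$ arises because the averaging window has length $2\log q$ while the separation is only at Bowen scale $\log q$, and the symmetry $p\mapsto p'$ of Lemma~\ref{lem:step1} is used to handle the second half $[\log q,2\log q]$. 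Your heuristic about the two halves of the time interval is on the right track but slightly off: the orbit $\{a_t u_{p/q}x_0:t\in[0,2\log q]\}$ is a full divergent geodesic, and the symmetry sends its second half to the first half of the orbit attached to $p'/q$.
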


The current paper concerns itself with proving that the second item in Theorem~\ref{Thm_DS} is superfluous since it actually follows from the first item. Our main result is the following bound on the possible escape of mass for divergent orbits.
\begin{thm}\label{Thm_nonescape}
Let $h\in[0,1]$. Suppose that $\Lam_q\subseteq (\bZ/q\bZ)^\times$ are given so that  
\[
\liminf_{q\to\infty} \frac{\log |\Lam_q|}{\log q} \geq h.
\]
Then any weak$^\ast$ limit $\mu$ of the sequence $\del_{\Lam_q}^{[0, 2\log q ]}$ satisfies
\[
\mu(X) \geq 2h-1.
\]
\end{thm}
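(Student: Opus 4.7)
My plan for Theorem~\ref{Thm_nonescape} is to combine an entropy lower bound for the limit measure $\mu$ with the ``entropy-at-infinity'' upper bound for the geodesic flow on $X = \SL_2(\bR)/\SL_2(\bZ)$. Specifically, I would show that any weak-$*$ subsequential limit $\mu$ of $\del_{\Lam_q}^{[0, 2\log q]}$ satisfies $h_\mu(a_1) \geq h$ (where for a sub-probability $\mu$ we set $h_\mu := \mu(X) \cdot h_{\mu/\mu(X)}$), and then invoke the classical bound $h_\mu(a_1) \leq \tfrac{1 + \mu(X)}{2}$ valid for the modular surface to conclude $\mu(X) \geq 2h - 1$.

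For the entropy lower bound, the key observation is that the starting points $\{u_{p/q} x_0 : p \in \Lam_q\}$ form a $(1/q)$-separated subset of the compact horocycle $U \cdot x_0$, of cardinality $|\Lam_q| \geq q^{h - o(1)}$. Under the action of $a_t$, the unstable direction $U$ is expanded by $e^t$. For $t$ close to $2\log q$ the $|\Lam_q|$ points become mutually separated at unit scale, so that any two distinct starting orbits are Bowen-$(\veps, T)$-separated for $\veps = O(1)$ and a time horizon $T \asymp \log q$. A Ledrappier--Young type argument (or, alternatively, a direct partition-counting argument tailored to the non-compact setting) then promotes this separation into the measure-theoretic entropy bound $h_\mu(a_1) \geq h$ for any weak-$*$ subsequential limit.

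For the upper bound I would appeal to the entropy-at-infinity bound for $X = \SL_2(\bR)/\SL_2(\bZ)$, following Einsiedler--Kadyrov--Pohl and Kadyrov--Kleinbock--Lindenstrauss--Margulis: for any $a_1$-invariant sub-probability measure $\mu$ obtained as a weak-$*$ limit of probability measures,
\[
h_\mu(a_1) \;\leq\; \mu(X) \cdot h_{\mathrm{top}}(a_1|_X) + (1 - \mu(X)) \cdot h_\infty \;=\; \frac{1 + \mu(X)}{2},
\]
where $h_{\mathrm{top}}(a_1|_X) = 1$ is the topological entropy of the geodesic flow and $h_\infty = \tfrac{1}{2}$ is the entropy contribution from the cusp of the modular surface. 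Noting that the averaging over $t \in [0, 2\log q]$ with $2\log q \to \infty$ guarantees $a_s$-invariance of every weak-$*$ limit, we combine with the previous step to get $h \leq \tfrac{1 + \mu(X)}{2}$, i.e.\ $\mu(X) \geq 2h - 1$.

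The main obstacle is the entropy lower bound step. The measures $\mu_q = \del_{\Lam_q}^{[0, 2\log q]}$ are not themselves $a_t$-invariant, and the escape of mass to the cusp in the weak-$*$ limit invalidates standard upper-semicontinuity of entropy. One must construct an $a_1$-invariant partition of $X$ well-adapted to the unstable foliation and bound, uniformly in $q$, the number of orbit segments in each partition element. The arithmetic separation of $1/q$ between distinct rationals in $\Lam_q$, amplified by the unstable expansion $e^t$, is the precise input that forces the limit to carry entropy at least $h$; tracking this separation throughout the non-trivial time horizon $[0, 2\log q]$ and against possible concentration at the cusp is the technical heart of the argument.
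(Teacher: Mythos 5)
Your outline identifies the right circle of ideas (the $1/q$-separation of $\{u_{p/q}x_0 : p\in\Lam_q\}$, unstable expansion, and the fact that cusp excursions can only produce ``entropy'' at rate $\tfrac12$), but the key intermediate step is a genuine gap, not just a technical one. The claim $h_\mu(a_1)\geq h$ for the weak$^*$ limit, with your normalization $h_\mu=\mu(X)\,h_{\mu/\mu(X)}$, is too strong: combined with the trivial bound $h_{\mu/\mu(X)}\leq 1$ it would yield $\mu(X)\geq h$, which is strictly stronger than the theorem, and it fails in the extremal scenario the theorem is designed to rule against --- orbits may spend a fraction $\kappa\approx 2(1-h)$ of their time in the cusp, where separated orbit segments exist at rate only $\tfrac12$ per unit time; that portion of the ``entropy'' is carried by the escaping mass and is lost in the limit, which is exactly why upper semicontinuity breaks. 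No Ledrappier--Young or partition-counting argument can transfer the full rate $h$ to $\mu$; what the separation actually yields is the inequality $h\leq \mu(X)\cdot 1+(1-\mu(X))\cdot\tfrac12$ directly, and proving that is equivalent to the covering estimate the paper uses: \cite[Theorem 1.5]{KKLM} (Theorem~\ref{Thm_KKLM}) bounds by $\ll e^{(1-\kappa/2)T}$ the number of Bowen-type balls needed to cover starting points whose orbit spends a fraction $\geq\kappa$ of $[0,T]$ outside a fixed compact set, and the $1/q$-separation converts this into a count $\ll q^{1-\kappa/2+o(1)}=o(|\Lam_q|)$ of ``bad'' numerators once $\kappa>2(1-h)$ (Proposition~\ref{prop_non_escape}), giving the mass bound with no entropy of the limit ever invoked. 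Also note that the EKP/KKLM bound you quote controls $\limsup$ of entropies of a \emph{sequence of invariant probability measures} converging to $\mu$; as you use it (an upper bound $\mu(X)h_{\mu/\mu(X)}\leq\tfrac{1+\mu(X)}{2}$ for the limit itself) it is trivially true, so all the content sits in the lower-bound step that is precisely the unproved/false one.

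The second gap is the time window. The $1/q$-separation only ``pays'' up to time $\log q$: for horizon $2\log q$ the covering bound allows $e^{(1-\kappa/2)\cdot 2\log q}=q^{2-\kappa}$ balls, which exceeds $q\geq|\Lam_q|$ for every $\kappa<1$, so the counting is vacuous over $[0,2\log q]$; and if you only control the first half of each orbit, the rate relative to the full averaging window drops by a factor of $2$ (for $h=1$ you would only get $\mu(X)\geq\tfrac12$, which does not suffice for Corollary~\ref{cor: main}). The paper resolves this with the dual-lattice symmetry of Lemma~\ref{lem:step1} (from \cite{DS}): $\cavg{p/q}=\tfrac12\del_{p/q}^{[0,\log q]}+\tfrac12\tau_{*}\del_{p'/q}^{[0,\log q]}$ with $pp'\equiv-1\ (\mathrm{mod}\ q)$, so the second half of each orbit is, after the dual involution, the first half of an orbit starting from another $1/q$-separated set of the same cardinality. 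Some substitute for this step is indispensable and is absent from your proposal.
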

Combining Theorem \ref{Thm_DS} and Theorem \ref{Thm_nonescape} with $h=1$, we have
\begin{cor}\label{cor: main}
Let $\Lam_q\subseteq (\bZ/q\bZ)^\times$ be subsets satisfying 
$\lim_{q\to\infty} \frac{\log |\Lam_q|}{\log q}=1$, then $\del_{\Lam_q}^{[0,2\log q]} \wstar \mu_X$.
\end{cor}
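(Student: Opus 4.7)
The plan is to deduce the corollary directly from Theorem~\ref{Thm_DS} and Theorem~\ref{Thm_nonescape}. The hypothesis $\lim_{q\to\infty} \tfrac{\log|\Lam_q|}{\log q} = 1$ is precisely the first of the two assumptions required to apply Theorem~\ref{Thm_DS}, so the only remaining task is to verify the second: that the sequence $\del_{\Lam_q}^{[0,2\log q]}$ does not exhibit escape of mass.

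For this, I would apply Theorem~\ref{Thm_nonescape} with $h = 1$. Since the assumption in the corollary immediately gives $\liminf_{q\to\infty} \tfrac{\log|\Lam_q|}{\log q} \geq 1$, Theorem~\ref{Thm_nonescape} yields that every weak$^\ast$ limit $\mu$ of the sequence satisfies $\mu(X) \geq 2h - 1 = 1$. On the other hand, each $\del_{\Lam_q}^{[0,2\log q]}$ is a probability measure, and weak$^\ast$ limits of probability measures on the non-compact space $X$ are a priori subprobability measures satisfying $\mu(X) \leq 1$. Together these two bounds force $\mu(X) = 1$, which is by definition the non-escape of mass of the sequence.

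With both hypotheses of Theorem~\ref{Thm_DS} confirmed, that theorem delivers $\del_{\Lam_q}^{[0,2\log q]} \wstar \mu_X$, which is exactly the statement of the corollary. There is no genuine obstacle at this stage: all of the technical work is carried out inside Theorem~\ref{Thm_nonescape} (whose proof is the main contribution of the paper), and the corollary is a short two-line assembly of the two preceding theorems at the boundary value $h = 1$.
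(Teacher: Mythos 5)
Your proposal is correct and matches the paper exactly: the corollary is obtained by applying Theorem~\ref{Thm_nonescape} with $h=1$ to rule out escape of mass, and then invoking Theorem~\ref{Thm_DS}. Nothing further is needed.
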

\begin{remark}
    We note here that the assumption $\lim_{q\to \infty}\frac{\log|\Lam_q|}{\log q}=1$ in Theorem~\ref{Thm_DS} and Corollary~\ref{cor: main} is necessary and refer the reader to \S\ref{sec:examples} for a discussion on the matter.
\end{remark}

To illustrate the trajectories $\{a_{t}u_{p/q}x_0:\ t\in [0,2\log q]\}$, we show in Figure~\ref{fig: geodesic_average} their projection to the upper half plane $\mathbb{H}$, which is identified with the quotient $K\backslash G$, where $K=\SO_{2}(\mathbb{R})$. The trajectories are shown in the standard fundamental domain, given by points $z\in\mathbb{H}$ with $|\Re(z)|\leq \frac{1}{2}$ and $|z|\geq 1$. 
As discussed in~\cite{DS}, at times $t=0,2\log q$, these trajectories start and end at height $y=1$ in the upper half plane, and for $t\not\in[0,2\log q]$ the trajectories diverge directly into the cusp. Hence the $[0,2\log q]$-portion of the geodesic trajectory is the interesting one to study.

As can be observed, there is a variance between the trajectories for different values of $p$, with some comprising of just one long excursion to the cusp, and others disproportionally bounded. This is why averaging on a relatively large set is required to obtain equidistribution, and otherwise one can construct counter examples as we have in \S\ref{sec:examples} . 
Furthermore, there are visible symmetries between the trajectories for different values of $p$, which were proved in~\cite{DS} and are also used in this current paper (see Lemma~\ref{lem: half symmetry}).

\begin{figure}[H]
\includegraphics[height=4cm]{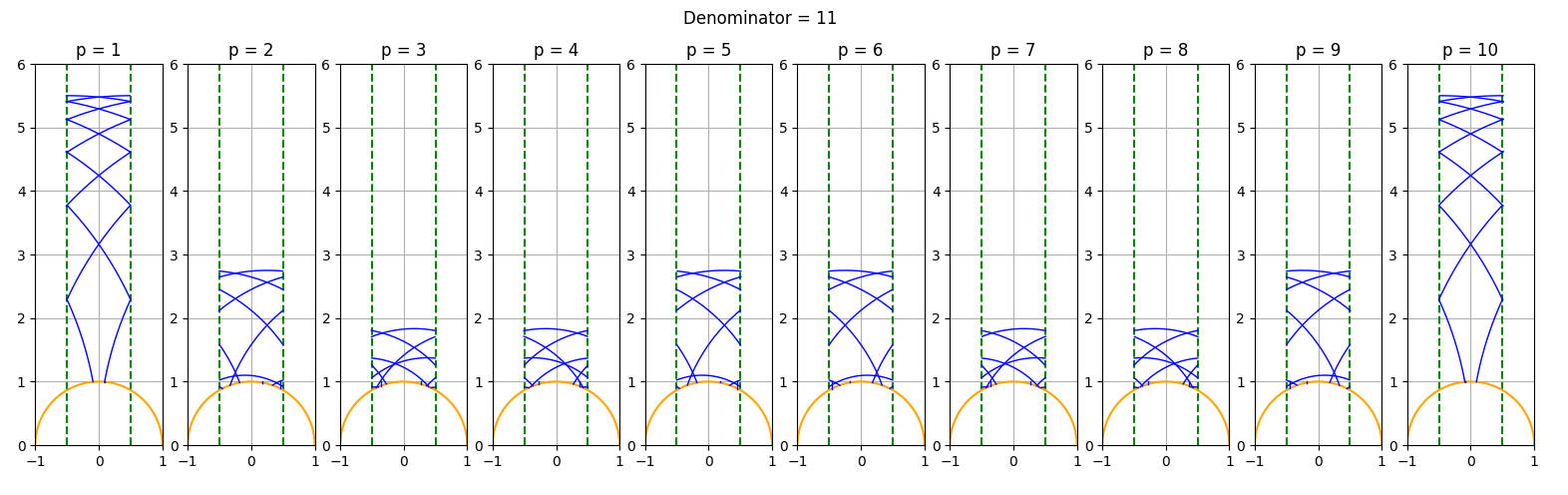}
\centering
\caption{The projection to the upper half plane, of the trajectories $\{a_{t}u_{p/q}x_0:\ t\in[0,2\log q]\}$, for $q=11$ and all $p\in(\Z/q\Z)^{\times}$.}
\label{fig: geodesic_average}
\end{figure}

\subsection{Prime numerator Zaremba}\label{sec: Zaremba}
In this part of the introduction we wish to present a conjecture we refer to as the \textit{prime numerator Zaremba} conjecture. The discussion in this subsection has a different flavour to it as we do not present any theorems but rather discuss a conjecture and present supporting data. Nevertheless, we find the conjecture interesting and think it fits in this paper as we were inspired by Theorem~\ref{thm 2} when
we formulated it. We hope it will inspire other people to look into it as well. 

The Zaremba conjecture asserts that for any denominator $q$, one can find $j\in (\bZ/q\bZ)^\times$ such that the \cfe of $j/q=\br{a_1,\dots, a_n}$ satisfies $a_i\le 5$ for 
all $1\le i\le n$. Lets call such a $j$, a $5$-Zaremba numerator for the denominator $q$. 
This conjecture is in direct contrast to Theorem~\ref{thm 1} which implies 
in particular, that the set of $5$-Zaremba numerators in $(\bZ/q\bZ)^\times$ has size bounded by $o(q^{1-\epsilon})$ as $q\to\infty$ and in particular its proportion in $(\bZ/q\bZ)^{\times}$ approaches zero.  Intrigued by this, we used the computer to check if one should expect the existence of $5$-Zaremba \textit{prime} numerators and it seems that the data suggests so. We therefore
suggest:
\begin{conj}
\label{conj: prime Zaremba}
For any $q>1$, there exists a prime residue $p\in (\bZ/q\bZ)^\times$ which is $5$-Zaremba for $q$. Moreover, for any $q>1$ which is not one of $\{6, 54, 150\}$ there exists a 4-Zaremba prime numerator.
\end{conj}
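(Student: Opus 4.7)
The plan is to split the conjecture into a finite verification range and an asymptotic range. For $q$ up to some explicit threshold $Q_{0}$, one enumerates, for every prime residue $p \in (\bZ/q\bZ)^{\times}$, the \cfe of $p/q$ and checks directly that the partial quotients are bounded by $5$ (respectively by $4$, outside the three listed exceptions). The exceptional set $\{6,54,150\}$ for the $4$-Zaremba version is already a hint that $Q_{0}$ must be chosen large enough to exhibit all sporadic obstructions, but it remains a finite and well-defined computation. The substance of the proof is then in the asymptotic range $q > Q_{0}$, where one must show that the set
\[
Z_{A}(q) = \set{j \in (\bZ/q\bZ)^{\times} : j/q \text{ is } A\text{-Zaremba}},
\]
with $A=5$ (or $A=4$), contains a prime for \emph{every} $q > Q_{0}$, not just for $q$ in a density-one set.

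Two ingredients are needed. The first is a \emph{universal} lower bound of the form $|Z_{A}(q)| \geq q^{1-o(1)}$ valid for every $q$, together with some regularity of $Z_{A}(q)$ on fibers modulo small primes. The second is an equidistribution statement for $Z_{A}(q)$ across residue classes modulo auxiliary small primes, of a quality sufficient to run a sieve (Harman's sieve, or a fundamental-lemma sieve combined with a Vinogradov-type Type I / Type II decomposition of the von Mangoldt function) and extract a prime from $Z_{A}(q)$. The dynamical result of this paper, Corollary~\ref{cor: main}, together with its polynomial effective version underlying Theorem~\ref{thm 1 exp}, is precisely the tool one would plug into the sieve: applied to subsets $\Lam_{q}$ lying in prescribed residue classes modulo small primes, it yields the Type I sums needed on the analytic number theory side. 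For Type II sums, I would try to exploit closure of $Z_{A}(q)$ under the action of the Gauss map on reduced fractions (which lifts to an affine action on the \cfe string), using it to unlock bilinear cancellation. In parallel, one would adapt Theorem~\ref{Thm_nonescape} to compact subsets of $X$ cut out by the condition ``all partial quotients $\le A$'', replacing the unrestricted horocycle averages by averages supported on bounded-cfe orbits; if the resulting compact-set equidistribution also retains a polynomial rate, then the sieve input is in hand.

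The main obstacle, and the reason the statement is phrased as a conjecture, is the universal lower bound on $|Z_{A}(q)|$. Unconditionally, Bourgain--Kontorovich and its successors give the required polynomial density for a density-one set of $q$, and upgrading this to \emph{every} $q$ is tantamount to solving Zaremba's conjecture itself; even the qualitative lower bound $|Z_{5}(q)| \geq 1$ for all $q$ is open. Thus the realistic content of this plan is conditional: assuming a strong quantitative Zaremba-type bound, the sieve step can plausibly be carried out using the equidistribution machinery developed here and a compact-set strengthening of Theorem~\ref{Thm_nonescape}, and then a finite computer check disposes of $q \leq Q_{0}$ to pin down the sharp constants $A=5$ and $A=4$ and the exact exceptional set $\{6,54,150\}$. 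The hardest step is not the sieve, but producing enough $A$-Zaremba numerators for \emph{every} individual large $q$, which at present lies beyond available techniques.
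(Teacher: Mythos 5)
This statement is a conjecture: the paper offers no proof of it, only numerical evidence (the computations reported in Figure~1), so there is no argument of the authors to compare yours against. Your proposal, as you yourself concede, is not a proof but a conditional program, and the condition it rests on is at least as strong as the open problem: a universal bound $|Z_A(q)|\geq q^{1-o(1)}$ for \emph{every} $q$ (indeed even $|Z_5(q)|\geq 1$ for every $q$) is Zaremba's conjecture itself, and Bourgain--Kontorovich-type results only reach density-one sets of denominators. So the plan is circular in the decisive step, and the finite threshold $Q_0$ for the computer check cannot even be specified without it.

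Beyond that, the way you propose to feed the paper's results into a sieve does not work. Theorem~\ref{thm 1 exp}, Theorem~\ref{thm 3} and Corollary~\ref{cor: main} require $\log|\Lam_q|/\log q\to 1$ and conclude that typical numerators in such large sets have Gauss--Kuzmin statistics; the set $Z_A(q)$ is expected to have size $q^{c_A}$ with $c_A<1$ (consistently with the paper's own observation that the proportion of $5$-Zaremba numerators in $(\bZ/q\bZ)^\times$ tends to zero), so these theorems say nothing about it, and what they do say about large sets is that bounded partial quotients are \emph{atypical} --- the opposite of the Type~I information a sieve for primes inside $Z_A(q)$ would need. The claimed ``closure of $Z_A(q)$ under the Gauss map'' is also not available: the Gauss map sends $p/q$ to the fractional part of $q/p$, changing the denominator, so it induces no action on residues modulo a fixed $q$ and yields no bilinear structure for Type~II sums. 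Finally, a ``compact-set strengthening'' of Theorem~\ref{Thm_nonescape} restricted to bounded-\cfe orbits would concern a fractal locus of dimension less than one and cannot follow from the covering bound of Theorem~\ref{Thm_KKLM}, which is an upper bound on cusp excursions, not a lower bound on the number of orbits staying in a compact set. In short, the statement remains open; your write-up correctly diagnoses why, but none of its steps can presently be carried out, and the equidistribution machinery of this paper is not the right input for them.
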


\begin{figure}
\includegraphics[width=6cm, height=5cm]{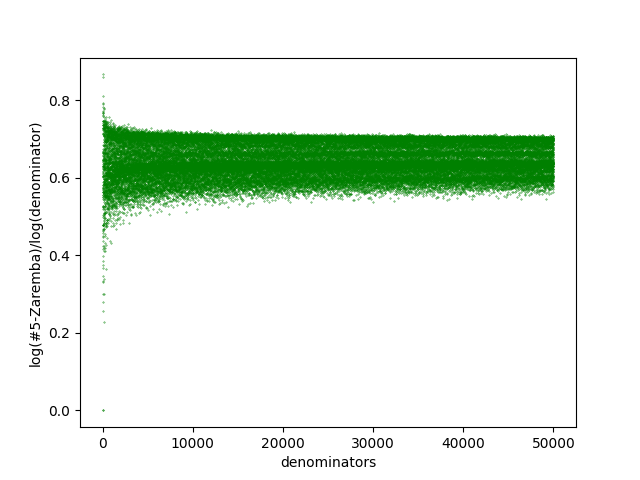}
\includegraphics[width=6cm, height=5cm]{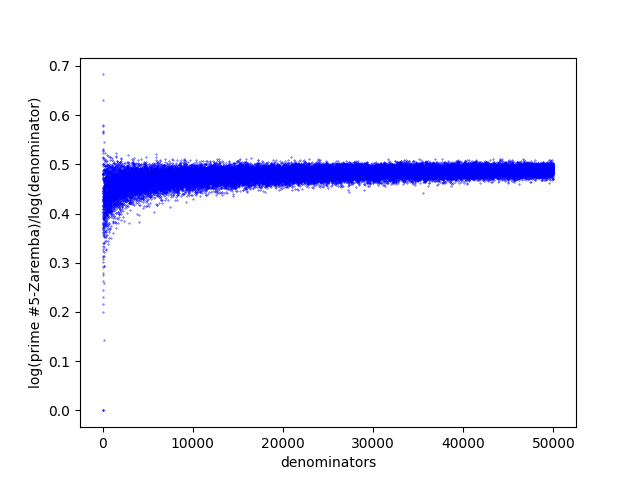}
\centering
\caption{Counting the 5-Zaremba numerators and plotting the $\frac{\log(\#numerators)}{\log(denominator)}$. On the left (green) all numerators, and on the right (blue) prime numerators.}
\label{fig: zaremba}
\end{figure}

The data, as can be seen in Figure~\ref{fig: zaremba}, suggests that not only there are Zaremba numerators for any $q$, but in general the number of them increases as the denominator grows. 
A related statement in this regard is to study the asymptotic number of Zaremba numerators for all $1\leq q\leq Q$, and show it grows fast enough as a function of $Q$ in order for the Zaremba conjecture to hold. Indeed, without the prime restriction on the numerators, this  was found  by Hensley to be true~\cite{HensleyBoundedDigits}. It would be interesting to know if it still holds with the prime numerator restriction.

One may speculate from the data in Figure~\ref{fig: zaremba} that the graphs converge to certain limits $\delta_{5}$ and $\delta_{5,P}$, respectively, as $q\to\infty$. 
If the limit $\delta_{5}$ exists, it easily follows from Hensley's work that the relation $\delta_{5}=2\dim_{H}E_{5}-1$ must be satisfied, where $E_{5}$ is the set of all irrationals in $(0,1)$ for which all digits in their \cfe are bounded by $5$.
It is therefore interesting to wonder if $\delta_{5,P}$ is also related to the Hausdorff dimension of some set. If so, it seems reasonable to guess from our data that this dimension should be strictly smaller than $\dim_{H} E_{5}$. Similar question can be asked for bounds other than~$5$.

\subsection{Counter examples}\label{sec:examples}
In this section we give examples for the necessity of the assumption $\lim_{q\to\infty}\frac{\log|\Lam_q|}{\log q}=1$ in Theorem~\ref{Thm_DS} and Corollary~\ref{cor: main}. In~ \S\ref{sec:Escape of mass} we show that without this assumption escape of mass can occur, and in~\S\ref{sec:equidistribution fail} that even if there is no escape of mass, equidistribution can still fail.

\subsubsection{Escape of mass}\label{sec:Escape of mass}
    Fix any $0<\alpha<1$. We consider \[\Lam_{q}=\{p\in (\Z/q\Z)^{\times}:\ p<q^{\alpha}\}\]
    and show that the measures $\delta_{\Lam_{q}}^{[0,2\log q]}$ exhibit escape of mass. 
    The size of these sets satisfies
    \[\lim_{q\to\infty}\frac{\log|\Lam_{q}|}{\log q}=\alpha,\]
    which is clear for prime $q$, and for general $q$ follows by using some inclusion-exclusion argument (as shown for example in Lemma \cite[Lemma 2.12]{DS}).

    For escape of mass, recall Mahler's compactness theorem (c.f.~\S\ref{sec: escape}) which says that an element $gx_0 \in X$ is near the cusp if the corresponding lattice $g\mathbb{Z}^2$ has a nontrivial short vector. More precisely, for any compact subset $Y\subset X$, there is some $M>1$ so that any $gx_0\in Y$ satisfies $\underset{0\neq v\in \Z^{2}}{\min}\|gv\|_{\infty}\geq \frac{1}{M}$.

    Note that any $p\in \Lam_{q}$ and $t\in \big(2\log M, 2\log q^{(1-\alpha)}-2\log M\big)$ satisfy
    \[\|a_{t}u_{p/q}\begin{pmatrix}
        0\\ 1
    \end{pmatrix}\|=\|\begin{pmatrix} e^{t/2}\frac{p}{q} \\ e^{-t/2} \end{pmatrix}\|<\frac{1}{M},\]
    i.e.\ the $[0,2\log q]$-trajectory of $u_{p/q}x_0$ starts with a long excursion to cusp, outside of $Y$, for approximately a $1-\alpha$ fraction of the total time.
    It follows that for any compact $Y\subset X$
    \[\limsup_{q\to\infty}\delta_{\Lam_{q}}^{[0,2\log q]}(Y)\leq \alpha,\]
    and therefore $\mu(X)\leq \alpha$ for any weak* limit $\mu$ of $\delta_{\Lam_{q}}^{[0,2\log q]}$, showing escape of mass and in particular preventing equidistribution.

\subsubsection{Equidistribution}\label{sec:equidistribution fail}
We show next that even if there is no escape of mass, equidistribution still may fail.
    For a denominator $q$, consider the $n$-Zaremba numerators as in ~\S\ref{sec: Zaremba}: 
    \[\Lambda_{q,n}=\{p\in (\Z/q\Z)^{\times}:\ \text{all digits in the \cfe of $p/q$ are at most $n$}\}.\]
    Let \[R_{n}(Q)\coloneqq \{(p,q):\ p\in \Lambda_{q,n},\ 1\leq q\leq Q\}.\]
    By a result of Hensely~\cite{Hensley1990II}, we have
    \[\lim_{Q\to \infty}\frac{|R_{n}(Q)|}{Q^{2 \dim_{H}E_n}}=c_n\]
    for some constant $c_n>0$, where $E_n$ is the set of all irrationals in $(0,1)$ for which all digits in their \cfe are bounded by $n$.
    As \[|R_{n}(Q)|=\sum_{1\leq q\leq Q}|\Lambda_{q,n}|\leq Q\cdot \max_{1\leq q\leq Q}|\Lambda_{q,n}|,\]
    it follows from Hensley's result that there exists an increasing sequence $(q_i)_{i=1}^{\infty}$ so that 
    \[\liminf_{i\to\infty}\frac{\log |\Lam_{q_i,n}|}{\log q_i}\geq 2\dim_{H}E_{n}-1.\]
    It is well known~\cite{Jarnik1928diophantine} that \[\lim_{n\to\infty}\dim_{H} E_{n}=1.\]
    Therefore, by choosing $n$ large enough, the collection $\{\Lambda_{q_i,n}\}_{i=1}^{\infty}$ can be assumed to be of logarithmic size as close to $1$ as we would like.

    However, due to the small digits in the \cfe of $\frac{p}{q_i}$ for $p\in\Lam_{q_i,n}$ there is some compact set which depends only on $n$, on which $\delta_{\Lam_{q_i,n}}^{[0,2\log q_i]}$ is supported for all $i$ (c.f.\ the proof of Theorem~1.4 in~\cite[pg.\ 173]{DS}). It follows that any weak* limit of $\delta_{\Lam_{q_i},n}^{[0,2\log q_i]}$ is a compactly supported probability measure, in particular not equal to $\mu_{X}$. 

\subsection{Structure of the paper} In \S\ref{sec: escape} we prove Theorem~\ref{Thm_nonescape}, which as discussed above implies the strengthening of the equidistribution result as in Corollary~\ref{cor: main}. In \S\ref{sec: proof of theorem 3} we deduce 
Theorems~\ref{thm 1 exp}-\ref{thm 2} which are the applications for the \cfe of rationals.

\subsection*{Acknowledgments}
We are grateful to Elon Lindenstrauss for suggesting to us to look into improving Theorem~\ref{Thm_DS}. This work has received funding from the European Research Council (ERC) under the European Union’s Horizon 2020 Research and Innovation Program, Grant agreement no. 754475.
R.M.\ acknowledges the support by ERC 2020 grant HomDyn (grant no.~833423).

\section{Non-escape of mass for divergent orbits}\label{sec: escape}
In this section we prove Theorem~\ref{Thm_nonescape}. 
The key input required for the proof is the relation between entropy and escape of mass, which was studied in various previous works. The $G/\Gamma=\SL_{2}(\bR)/\SL_{2}(\bZ)$ case which we consider in this work, was studied in~\cite{ELMV2011distribution} by Einsiedler, Lindenstrauss, Michel and Venkatesh. Since then, generalizations were obtained for other rank $1$ situations~\cite{EKP2015escape,Mor2022excursions}, as well as for the high rank case~\cite{EK2012entropy,KKLM,Mor2023a}. We give here a brief overview of the ingredients we use and the idea of the proof.

For $N\in\bN$ and $\eta>0$, let 
$B_{N,\eta}^{+}=\bigcap_{n=0}^{N}a_{-n}B_{\eta}^{G}a_{n}$, where $B_{\eta}^{G}$ is the ball in $G$ of radius $\eta$ around the identity.
Define a forward Bowen $(N,\eta)$-ball to be a set of the form $B_{N,\eta}^{+}z$ for $z\in G/\Ga$.
For $\Lambda_{q}\subseteq (\bZ/q\bZ)^{\times}$, we consider the set $X_{\Lambda_{q}}\coloneqq\{u_{p/q}x_0:\ p\in \Lambda_{q}\}$.
It is straightforward to show~\cite[Lemma 2.10]{DS} that the points of $X_{(\bZ/q\bZ)^{\times}}$ are well separated, in the sense that two distinct points cannot belong to the same forward Bowen $(N_q,\eta)$-ball, where $N_{q}=\lfloor \log q \rfloor$, assuming $\eta$ is small enough.
Therefore, instead of studying the size of a given subset of $X_{(\bZ/q\bZ)^{\times}}$, we study the size of its covering by balls.

Note that a-priori, an amount of $\ll_{\eta} e^{N_q}$ forward Bowen $(N_{q},\eta)$-balls is required to cover a given compact subset of $X$, as each such ball has a small diameter of $\eta e^{-N_q}$ in the unstable direction. 
However, the key idea that was discovered in previous works is that if we only want to cover the points which spend a large proportion of their time high up in the cusp, we can cut down the number of balls. 
Quantitatively, if we only consider points which spend more than $\kappa N_{q}$ time in the cusp, the required number of balls is $\ll_{\eta} e^{(1-\kappa/2)N_q}$.
We will use this idea in the proof of Theorem~\ref{Thm_nonescape}, where we bound the escape of mass rate, by showing that most points of $X_{\Lambda_{q}}$ do not spend a lot of time in the cusp during the interval $[0,N_{q}]$.

\medskip

In our proof, we will use the version of the aforementioned covering result as given in~\cite{KKLM}.
For that, we first define the discrete average in the diagonal direction with $N$ steps of size $t$ of a measure $\nu$ to be 
\[\nu^{N,t}  = \frac{1}{N} \sum _{k=0}^{N-1} a_{tk}*\nu, \] 
and the continuous average on the segment $[T_1,T_2]$  
\[\nu^{[T_1,T_2]}  = \frac{1}{T_2-T_1} \int_{T_1}^{T_2} (a_{t}*\nu) dt.\] 

Next, consider the distance $d_{U}$ on $U$ induced by the absolute value on $\bR$, and let 
$B_r^{U}$ be the ball in $U$ of radius $r$ centered at the identity. Given a compact subset $Q\subset X$, $N\in \bN$, $\kappa \in (0,1)$, $t>0$, and $x\in X$, define the set of points which spend at least a fraction $\kappa$ of their discrete trajectory outside $Q$ by
\[
Z_x(Q,N,t,\kappa):=\left\{u\in B_1^{U} : \delta_{ux}^{N,t}(X\smallsetminus Q) \geq\kappa \right\},
\]
and similarly for $T>0$ the continuous version
\[C_{x}(Q,T,\kappa)\coloneqq\left\{u\in B_1^{U} : \delta_{ux}^{[0,T]}(X\smallsetminus Q) \geq\kappa \right\}.\]

Lastly, let $\al_1 :X\to \bR_{>0}$ be defined by $\al_1(x)=\max\{\frac{1}{\|v\|}:v\in x\smallsetminus\{0\}\}$, for $\|\cdot\|$ the Euclidean norm, where we consider $X$ as the space of unimodular lattices in $\bR^2$.

Then we  have the following covering counting theorem based on~\cite[Theorem 1.5]{KKLM}.
\begin{thm}\label{Thm_KKLM}
There exist $t_0>0$ such that the following holds. For any $t>t_0$ there exists a compact set $Q:=Q(t)$ of $X$ such that for any $N\in\bN$, $\kappa\in(0,1)$, and $x\in X$, the set $Z_x(Q,N,t,\kappa)$ can be covered with \[\al_{1}(x)e^{t/2}(t/2)^{3N}e^{(1-\frac{\kappa}{2})tN}\]
many balls in $U$ of radius $e^{-t(N-1)}$.
\end{thm}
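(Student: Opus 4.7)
The plan is to derive Theorem~\ref{Thm_KKLM} as a specialization of \cite[Theorem 1.5]{KKLM} to the rank-one setting $G/\Ga=\SL_2(\bR)/\SL_2(\bZ)$, with some bookkeeping needed to translate their covering by Bowen balls into a covering by $U$-balls of radius $e^{-t(N-1)}$, and to track the dependence on the starting point $x$ through the Margulis function $\al_1$. The compact set $Q(t)$ will be taken as a sublevel set $\set{x\in X:\al_1(x)\le M(t)}$ for an appropriately chosen $M(t)$ depending on $t$ (roughly $M(t)=e^{ct}$ for a suitable constant $c$), ensuring that the Margulis sub-invariance inequality kicks in.

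The core of the KKLM argument, which I would reproduce in the required form, is driven by the Margulis-type inequality: for $t$ sufficiently large there are constants $c_1,c_2$ with $\int_{B_1^U}\al_1(a_t ux)\,du \le c_1 e^{-t/2}\al_1(x)+c_2$. Iterating this inequality for the discrete orbit $(a_{tk}ux)_{k=0}^{N-1}$ shows that for each fixed index set $I\subseteq\set{0,1,\dots,N-1}$, the set $\set{u\in B_1^U:a_{tk}ux\notin Q(t)\text{ for all }k\in I}$ has a covering by $U$-balls of radius $e^{-tN}$ of cardinality at most $\al_1(x) e^{t/2}\cdot e^{(N-|I|/2)t}$, up to subexponential factors. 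Heuristically, each expansion step of size $t$ requires $e^t$ new $U$-balls, but each visit to the cusp discounts this by a factor $e^{t/2}$, reflecting that in rank one the cusp direction partially absorbs the unstable expansion. The factor $\al_1(x)e^{t/2}$ encodes the initial cusp height of $x$ together with a boundary correction.

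Summing over all index sets $I$ with $|I|\ge\kappa N$ and using the crude bound on the number of such sets, together with the factor absorbing the initial height, yields the covering bound $\al_1(x)e^{t/2}e^{(1-\kappa/2)tN}$ times a subexponential prefactor; by enlarging this prefactor to $(t/2)^{3N}$ one comfortably accommodates all polynomial corrections that appear from the iteration (combinatorial counting of index sets, passage between Bowen balls and $U$-balls, and rounding in the Margulis inequality). The uniformity of $t_0$ in $x,N,\kappa$ follows because the Margulis-function constants $c_1,c_2$ and the sublevel threshold $M(t)$ can be fixed once and for all as soon as $t$ is large enough.

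The main obstacle in carrying out this plan is the reconciliation between KKLM's native formulation, which counts forward Bowen balls, and the formulation here, which counts $U$-balls of radius $e^{-t(N-1)}$. One must verify that the projection of a forward Bowen $(N,\eta)$-ball to $U$ is a $U$-ball of radius $\eta e^{-tN}$ up to bounded distortion, and that the radius mismatch between $e^{-tN}$ and $e^{-t(N-1)}$ is precisely what is absorbed by the $e^{t/2}$ in the $\al_1(x)e^{t/2}$ prefactor (together with the normalization of $B_1^U$). A secondary technical point is verifying that the choice $Q(t)$ is compact and that the resulting Margulis-function iterate sums to the stated exponent $(1-\kappa/2)tN$ without loss; this is a routine but delicate book-keeping exercise that closely mirrors the argument already carried out in \cite{KKLM}.
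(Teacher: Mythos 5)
Your overall strategy---obtain the statement from \cite[Theorem 1.5]{KKLM}---is the same as the paper's, but the step that is the entire content of the paper's (very short) proof, namely the precise translation between the two formulations, is exactly where your proposal is vague and, in the one place where it is concrete, wrong. The discrepancies with \cite{KKLM} are: (i) the flow normalization (their geodesic flow runs at twice the speed, which after substituting $t\mapsto t/2$ produces the $(t/2)^{3N}$ and the exponent $(1-\frac{\kappa}{2})tN$), and (ii) a time shift: their set is defined by the condition $\del_{a_t u x}^{N,t}(X\smallsetminus Q)\geq\kappa$, i.e.\ the orbit is sampled at times $t,2t,\dots,Nt$, whereas $Z_x(Q,N,t,\kappa)$ here samples at times $0,t,\dots,(N-1)t$. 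The paper resolves (ii) by a one-line conjugation: $a_{-t}Z_x(Q,N,t,\kappa)a_t$ is contained in a set of KKLM's form based at the point $a_{-t}x$, so \cite[Theorem 1.5]{KKLM} covers it by $\al_1(a_{-t}x)(t/2)^{3N}e^{(1-\frac{\kappa}{2})tN}$ balls of radius $e^{-tN}$; conjugating the cover back by $a_t$ dilates each ball by $e^{t}$, giving radius $e^{-t(N-1)}$, and the bound $\al_1(a_{-t}x)\leq e^{t/2}\al_1(x)$ gives the extra prefactor $e^{t/2}$. In particular, your claim that the radius mismatch between $e^{-tN}$ and $e^{-t(N-1)}$ is ``absorbed by'' the $e^{t/2}$ is incorrect: enlarging the covering balls costs nothing, and both the $e^{t/2}$ and the shift from $N$ to $N-1$ come from the same base-point/conjugation argument, driven by the time shift (ii) that your proposal never identifies.

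Your fallback plan---re-deriving the covering bound via the Margulis-type inequality for $\al_1$, iterating over index sets $I\subseteq\{0,\dots,N-1\}$ and summing over $|I|\geq\kappa N$---is indeed the internal mechanism of \cite{KKLM} and could in principle be carried out in this rank-one setting, but as written it is a promissory note: the per-index-set covering count, the origin of the $e^{t/2}$ (described only as a ``boundary correction''), and the passage from Bowen balls to $U$-balls are all deferred to ``routine bookkeeping,'' which is precisely where the explicit constants of the statement (used later in Lemma~\ref{lem_small_step} and Proposition~\ref{prop_non_escape}) must be checked. Either carry out that bookkeeping in full, or---far more economically---use the conjugation reduction above and quote \cite[Theorem 1.5]{KKLM} as a black box, as the paper does.
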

\begin{remark}\label{remark:KKLM}
    The original statement of~\cite{KKLM} used slightly different notations, with $a_{t}$ being the geodesic flow with a doubled rate, and $Z_{x}(Q,N,t,\kappa)$ defined by the requirement $\delta_{a_{t}ux}^{N,t}(X\smallsetminus Q)\geq\kappa$. 
    However, the translation to the statement of Theorem~\ref{Thm_KKLM} is straightforward as we show next.
\end{remark}
\begin{proof}[Proof of Theorem~\ref{Thm_KKLM}]
Observe that
\[a_{-t}Z_x(Q,N,t,\kappa)a_t \subset \left\{u\in B_1^U : \del_{a_t u a_{-t}x}^{N,t}(X\smallsetminus Q)\geq \kappa\right\}. \]
By \cite[Theorem 1.5]{KKLM}, the right hand side set can be covered with $\al_1(a_{-t}x)(t/2)^{3N}e^{(1-\frac{\kappa}{2})tN}$ balls in $U$ of radius $e^{-tN}$. Since $\al_1(a_{-t}x) \leq e^{t/2}\al_1(x)$, the theorem follows by conjugating with $a_t$. 
\end{proof}

It is convenient to work with the the following standard notations for the cusp structure of $X$. 
For any $M>0$, denote 
$$X^{\leq M} = \{x\in X : \al_1(x)\leq M \} \quad\text{and}\quad 
X^{> M} = \{x\in X : \al_1(x)> M \}.$$ 

While Theorem~\ref{Thm_KKLM} considers discrete averages, we would like to apply it to continuous averages. For that we have the following lemma.
\begin{lem}\label{lem_small_step}
\begin{enumerate}
    \item \label{item:1}
Let $x\in X$, $\kappa\in(0,1)$, $M>0$, and  $T,t\in\bR$ with $T\geq t>0$. Then
    $$C_{x}(X^{\leq M},T,\kappa)\subseteq Z_x(X^{\leq Me^{-t}}, \lfloor \frac{T}{t} \rfloor, t, \kappa).$$
    \item \label{item:2}
    Assume in addition that $t>t_0$, for $t_0$ as in Theorem~\ref{Thm_KKLM}. Then there are $M_{0},T_0>0$ which depend on $t$, so that for any $M\geq M_0$ and $T\geq T_0$, the set $C_{x}(X^{\leq M},T,\kappa)$ can be covered by 
    \[\alpha_{1}(x)\exp\Big((1-\frac{\kappa}{2}+\frac{3\log t}{t})T\Big)\]
    many balls in $U$ of radius $e^{-(T-2t)}$.
    \end{enumerate}
\end{lem}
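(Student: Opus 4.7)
The plan is to handle the two parts in sequence, first establishing a purely pointwise/combinatorial passage from continuous to discrete averages, then feeding the outcome into Theorem~\ref{Thm_KKLM}.

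For part (\ref{item:1}), the key geometric input is the Lipschitz-type estimate $\alpha_{1}(a_{s}y)\leq e^{|s|/2}\alpha_{1}(y)$ for all $y\in X$ and $s\in\bR$, which follows from $\|a_{s}v\|\geq e^{-|s|/2}\|v\|$ for all $v\in\bR^{2}$. Given $u\in C_{x}(X^{\leq M},T,\kappa)$, set $N_{0}=\lfloor T/t\rfloor$ and let $I=\{s\in[0,T]:a_{s}ux\in X^{>M}\}$, so $\mathrm{Leb}(I)\geq\kappa T$. For each $s\in I$ lying in $[tk,t(k+1))$ with $0\leq k\leq N_{0}-1$, the displayed inequality applied at $s'=tk$ yields $\alpha_{1}(a_{tk}ux)\geq e^{-t/2}\alpha_{1}(a_{s}ux)>Me^{-t/2}\geq Me^{-t}$, so $a_{tk}ux\in X^{>Me^{-t}}$. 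Partitioning $[0,T]$ into the intervals $[tk,t(k+1))$, an elementary pigeonhole count bounds from below the number of indices $k$ hit by $I$, and since each such $k$ contributes $a_{tk}ux\in X^{>Me^{-t}}$, we get $\delta_{ux}^{N_{0},t}(X\smallsetminus X^{\leq Me^{-t}})\geq\kappa$, i.e.\ $u\in Z_{x}(X^{\leq Me^{-t}},N_{0},t,\kappa)$.

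For part (\ref{item:2}), let $Q=Q(t)$ be the compact set produced by Theorem~\ref{Thm_KKLM}. Since $Q$ is compact and $\bigcup_{N>0}X^{\leq N}=X$, choose $M_{0}$ (depending on $t$) large enough that $Q\subseteq X^{\leq M_{0}e^{-t}}$. Then for $M\geq M_{0}$, $X\smallsetminus X^{\leq Me^{-t}}\subseteq X\smallsetminus Q$, hence
\[
C_{x}(X^{\leq M},T,\kappa)\subseteq Z_{x}(X^{\leq Me^{-t}},N_{0},t,\kappa)\subseteq Z_{x}(Q,N_{0},t,\kappa),
\]
where the first containment is part (\ref{item:1}). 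Theorem~\ref{Thm_KKLM} then covers the right-hand side with $\alpha_{1}(x)e^{t/2}(t/2)^{3N_{0}}e^{(1-\kappa/2)tN_{0}}$ many $U$-balls of radius $e^{-t(N_{0}-1)}$.

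It remains to check the two numerical claims. From $N_{0}t\geq T-t$ we obtain $t(N_{0}-1)\geq T-2t$, so each covering ball has radius at most $e^{-(T-2t)}$. For the count, using $N_{0}\leq T/t$ and $\log(t/2)\leq\log t$,
\[
(t/2)^{3N_{0}}e^{(1-\kappa/2)tN_{0}}\leq\exp\!\Big(\!\Big(1-\tfrac{\kappa}{2}+\tfrac{3\log t}{t}\Big)T\Big).
\]
The only remaining nuisance—and the main obstacle to getting exactly the stated exponent—is the leftover factor $e^{t/2}$. This is absorbed by requiring $T\geq T_{0}$ with $T_{0}$ depending on $t$: comparing $3\log(t/2)/t$ and $3\log t/t$ shows that $T\geq t^{2}/(6\log 2)$ suffices to give $e^{t/2}\leq\exp((3\log 2/t)T)$, so the whole count is bounded by $\alpha_{1}(x)\exp((1-\kappa/2+3\log t/t)T)$, completing the proof.
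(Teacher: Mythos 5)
Part (2) of your argument is correct and is essentially the paper's: containment into $Z_x(Q,\lfloor T/t\rfloor,t,\kappa)$ via $Q\subseteq X^{\leq M_0e^{-t}}$, then Theorem~\ref{Thm_KKLM}, with the stray $e^{t/2}$ and the $(t/2)$ versus $t$ discrepancy absorbed by taking $T_0(t)$ large; your explicit $T\geq t^{2}/(6\log 2)$ is a fine quantification of the paper's ``$T$ large compared to $t$''.

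Part (1), however, has a genuine gap at the leftover interval $[t\lfloor T/t\rfloor,\,T]$. Your mechanism only declares an in-range index $k\leq N_0-1$ bad when $I$ meets $[tk,t(k+1))$, and the ``elementary pigeonhole count'' you invoke is simply false for the in-range indices: from $\mathrm{Leb}(I)\geq\kappa T$ you only get at least $\kappa T/t$ intervals of the partition of $[0,T]$ hit, and one of them may be the truncated piece $[tN_0,T]$, whose index $N_0$ does not appear in the discrete average $\delta_{ux}^{N_0,t}$. Concretely, take $T=1.9t$, so $N_0=1$, and $\kappa=0.4$: an orbit with $a_sux\in X^{\leq M}$ for all $s\in[0,t)$ and $a_sux\in X^{>M}$ exactly for $s\in[t,1.76t]$ satisfies $\mathrm{Leb}(I)=0.76t=\kappa T$, so $u\in C_x(X^{\leq M},T,\kappa)$, yet $I$ meets no in-range interval and your argument produces no bad discrete index at all, while the conclusion requires $ux\in X^{>Me^{-t}}$. (The conclusion is still true, but only via a propagation over a window of length up to $2t$, which you never perform --- note you only use $\alpha_1$-decay by $e^{-t/2}$ within a single interval and then weaken $Me^{-t/2}$ to $Me^{-t}$; the factor $e^{-t}$ in the lemma is there precisely to allow $2t$-windows.) This is exactly the point the paper's proof is built around: it works with the good set and observes that if $a_{nt}ux\in X^{\leq Me^{-t}}$ then $a_sux\in X^{\leq M}$ for all $s\in[nt,nt+2t]$, so the maximal good index $n_0$ protects the extended interval $[n_0t,(n_0+1+\{T/t\})t)$, and this extra length $\{T/t\}t$ is what compensates the fractional part in the comparison of the two averages. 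Equivalently, in your bad-set language: if some in-range index is good, then $\mathrm{Leb}(I)\leq t\,|B|$ where $B$ is the set of bad in-range indices, which does give $|B|\geq\kappa T/t\geq\kappa N_0$; but establishing that inequality requires the $2t$-window observation, not the naive pigeonhole. Without this ingredient your proof of item~(\ref{item:1}) does not go through (except in the special case $t\mid T$, which is not the case in the application $T=\log q$).
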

\begin{proof}
In order to prove item~\eqref{item:1}, we will show that
    \[\delta_{x}^{\lfloor T/t\rfloor,t}(X^{\leq Me^{-t}})\leq\delta_{x}^{[0,T]}(X^{\leq M}), \]
    as the desired result follows immediately from this claim and the definitions.
    
    In order to prove the claim above, observe that for any $x\in X$, 
    \[\al_1(a_t x) \leq e^{t/2}\al_1(x).\]
    It follows that if $y\in X^{\leq Me^{-t}}$, then $\{a_{s}y:\ s\in[0,2t]\}\subset X^{\leq M}$.
    Let \[I=\{0\leq n< \lfloor\frac{T}{t} \rfloor : a_{nt}x \in X^{\leq Me^{-t}}\},\] so that $\frac{|I|}{\lfloor T/t \rfloor}=\delta_{x}^{\lfloor T/t\rfloor,t}(X^{\leq Me^{-t}})$. If $I=\emptyset$, then $\delta_{x}^{\lfloor T/t\rfloor,t}(X^{\leq Me^{-t}})=0$ and the claim is trivial.
    Otherwise, letting $n_0 = \max(I)$ we get that 
    $$\left(\bigsqcup_{n_0\neq n\in I} \left[nt,(n+1)t\right)\right) \sqcup [n_0t, (n_0+1+\left\{\frac {T}{t}\right\})t) \subseteq \{ s\in [0,T] : a_sx\in X^{\leq M}  \},$$
    where $\{y\}:=y-\lfloor y\rfloor$  for any $y\in\bR$.
    It follows that
    \begin{align*}
    T\cdot \delta_x^{[0,T]}(X^{\leq M}) & \geq t\cdot( |I| + \left\{\frac{T}{t}\right\})=t\cdot( \left\lfloor \frac{T}{t}\right\rfloor \cdot \frac{|I|}{\lfloor T/t\rfloor} + \left\{\frac{T}{t}\right\}) \\ &\geq t\cdot( \left\lfloor \frac{T}{t}\right\rfloor  + \left\{\frac{T}{t}\right\})\cdot \frac{|I|}{\lfloor T/t\rfloor}=T\cdot \delta_{x}^{\lfloor T/t\rfloor,t}(X^{\leq Me^{-t}}).
    \end{align*}
    Dividing by $T$ proves the claim, and thus completes the proof of item~\ref{item:1}.
    
    \medskip  
    
    To prove item~\ref{item:2}, let $Q=Q(t)$ be as in Theorem~\ref{Thm_KKLM}.
    Let $M_0=M_0(t)>0$ large enough so that $Q(t) \subseteq X^{\leq M_0 e^{-t}}$. Then, for any $M\geq M_0$, we have by definition and item~\ref{item:1}
    \eqlabel{eq:2.3.item2}{C_{x}(X^{\leq M},T,\kappa)\subseteq Z_x(Q, \lfloor \frac{T}{t} \rfloor, t, \kappa).}
    By Theorem~\ref{Thm_KKLM}, the RHS of~\eqref{eq:2.3.item2}, and hence the LHS as well, can be covered by 
    \[\alpha_{1}(x)e^{t/2}(t/2)^{3\left\lfloor\frac{T}{t}\right\rfloor}\exp\Big( (1-\frac{\kappa}{2}) t\left\lfloor\frac{T}{t}\right\rfloor\Big) \leq  \alpha_{1}(x)\exp\Big((1-\frac{\kappa}{2}+\frac{3\log (t/2)}{t}+\frac{t}{2T})T\Big)\]
    many balls in $U$ of radius
    \[e^{-t(\lfloor \frac{T}{t}\rfloor-1)}\leq e^{-(T-2t)}.\]
    Assuming $T$ is large enough compared to $t$, the size of this covering is also bounded by
    \[\alpha_{1}(x)\exp\Big((1-\frac{\kappa}{2}+\frac{3\log t}{t})T\Big),\]
    as desired.
\end{proof}

The previous lemma handled the covering of all points in the entire open ball $B_{1}^{U}$ (with certain restrictions on the time spent in the cusp).
To prove the results of this paper we need to  consider instead discrete averages over rational points, as follows. Given a set $\Lam\subseteq (\bZ/q\bZ)^\times$, we define
\[
\del_\Lam\defn \frac{1}{\av{\Lam}}\sum_{p\in \Lam} \del_{u_{p/q}x_0},
\]
with $q$ implicit in the notation (note that this definition agrees with~\eqref{eq: discrete avg}). Using that previous covering result, we show that the measures $\delta_{\Lambda_{q}}^{[0,\log q]}$ don't exhibit escape of mass. Extending later to time $2\log q$ will be an easier task.

\begin{prop}\label{prop_non_escape}
    Let $h\in [0,1]$ and $\Lambda_{q}\subseteq (\Z/q\Z)^{\times}$ such that 
    \[\liminf_{q\to\infty}\frac{\log|\Lambda_{q}|}{\log q}\geq h.\]
    Then for all $\epsilon>0$ there is a compact set $X_{\epsilon}\subset X$ so that
    \[\liminf_{q\to\infty} \delta_{\Lambda_{q}}^{[0,\log q]}(X_{\epsilon})\geq 2h-1-\epsilon.\]
\end{prop}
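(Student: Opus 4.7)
The plan is to combine the covering bound of Lemma~\ref{lem_small_step}(2) with the trivial $U$-separation of the rational points $\{u_{p/q}x_0 : p \in \{1,\dots,q-1\}\}$, which guarantees $|p/q - p'/q| \geq 1/q$ for distinct $p,p'$. Given $\epsilon > 0$, I first note that if $2h - 1 - \epsilon \leq 0$ the statement is vacuous (take any compact set), so I may assume $h > 1/2$. I will then set $\kappa := 2(1-h) + \epsilon/2 \in (0,1)$, choose $t > t_0$ large enough (depending on $\epsilon$) that $3\log t/t < \epsilon/20$, fix $M \geq M_0(t)$, and declare $X_\epsilon := X^{\leq M}$, which is compact by Mahler's criterion.

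The heart of the argument is to count the ``bad'' numerators, namely $p \in \Lambda_q$ for which $u_{p/q}x_0 \in C_{x_0}(X^{\leq M}, \log q, \kappa)$, i.e.\ those whose trajectory spends more than a $\kappa$-fraction of the interval $[0,\log q]$ outside $X^{\leq M}$. Applying Lemma~\ref{lem_small_step}(2) with $x=x_0$ (so $\alpha_1(x_0)=1$) and $T = \log q$, the set $C_{x_0}(X^{\leq M}, \log q, \kappa)$ will be covered by at most $q^{1-\kappa/2 + 3\log t/t} \leq q^{h-\epsilon/5}$ many $U$-balls of radius $e^{2t}/q$. Since distinct $p/q$'s are $1/q$-separated, each such $U$-ball contains $O(e^{2t}) = O_\epsilon(1)$ points from $\{u_{p/q}x_0\}$, so the total number of bad $p$ is $O_\epsilon(q^{h-\epsilon/5})$. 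Using the hypothesis $|\Lambda_q| \geq q^{h - o(1)}$, the proportion of bad $p$ in $\Lambda_q$ will tend to $0$ as $q\to\infty$.

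For every remaining good $p\in\Lambda_q$, by definition of $C_{x_0}$ one has $\delta_{u_{p/q}x_0}^{[0,\log q]}(X^{\leq M}) \geq 1 - \kappa = 2h - 1 - \epsilon/2$. Averaging over $p\in \Lambda_q$ and taking liminfs then yields
\[\liminf_{q\to\infty} \delta_{\Lambda_q}^{[0,\log q]}(X^{\leq M}) \geq (1-o(1))(2h - 1 - \epsilon/2) > 2h - 1 - \epsilon,\]
completing the proof.

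The main obstacle I anticipate is the balance between the parameters: one needs $1 - \kappa/2 < h$ to beat the size of $\Lambda_q$, while simultaneously $\kappa < 1$ is required for the covering lemma to apply. Together these force $h > 1/2$ and pin down the numerical factor $2h-1$ in the conclusion, exactly matching the statement. A secondary point is that the radius $e^{2t}/q$ of the covering balls produces a constant factor $O(e^{2t})$ in the points-per-ball count; since $t$ is fixed once $\epsilon$ is chosen, this factor is $O_\epsilon(1)$ and is absorbed harmlessly against the gain $q^{-\epsilon/5}$ in the covering count.
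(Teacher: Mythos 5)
Your proposal is correct and follows essentially the same route as the paper's proof: split $\Lambda_q$ into good/bad numerators according to the fraction of time $[0,\log q]$ spent outside $X^{\leq M}$, cover the bad set via Lemma~\ref{lem_small_step}(2), use the $1/q$-separation of the points $p/q$ to get $O_\epsilon(1)$ points per ball of radius $e^{2t}/q$, and beat this count with the lower bound $|\Lambda_q| \geq q^{h-o(1)}$ under the constraint $1-\kappa/2 < h$, $\kappa < 1$. The only differences are bookkeeping (your $\kappa = 2(1-h)+\epsilon/2$ versus the paper's $2(1-h)+\epsilon$, with the vanishing bad proportion absorbed into the remaining $\epsilon/2$ slack), which is immaterial.
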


\begin{proof}
Let us start,  for simplicity of notation, with a general finite subset $\Theta \subseteq U$ and time $T>0$, and at the end specialize to $\Theta_{q} = \{u_{p/q}:\ p\in\Lambda_q\}$ and $T= \log(q)$.

We begin with a general upper bound for the `escaping mass' 
$$\frac{1}{|\Theta|}\sum_{u\in\Theta}\delta_{ux_0}^{[0,T]}(X^{>M}),$$ 
by separating $\Theta$ into `good' and `bad' points according to how much time the corresponding diagonal orbit spends in the cusp. 
For any $\kappa>0$, let
\[\begin{split}
\Theta^{<\kappa,M}&\coloneqq\left\{u\in \Theta : \delta_{ux_0}^{[0,T]}(X^{>M}) < \kappa \right\}=\Theta\smallsetminus C_{x_0}(X^{\leq M},T,\kappa),\\
\Theta^{\geq\kappa,M}&\coloneqq \left\{u\in \Theta : \delta_{ux_0}^{[0,T]}(X^{>M}) \geq \kappa \right\}=\Theta\cap C_{x_0}(X^{\leq M},T,\kappa).
\end{split}\]
Then it follows that:
\begin{align*}\label{Eq_est}
&\frac{1}{ |\Theta|}\sum_{u\in \Theta} \del_{ux_0}^{[0,T]}(X^{> M})\\\numberthis
=&  \frac{1}{ |\Theta|}\sum_{u\in \Theta^{<\kappa,M}} \del_{ux_0}^{[0,T]}(X^{> M})
+\frac{1}{ |\Theta|}\sum_{u\in \Theta^{\geq \kappa,M}} \del_{ux_0}^{[0,T]}(X^{> M})\\
\leq& \kappa + \frac{|\Theta^{\geq\kappa,M}|}{|\Theta|}.
\end{align*}

Hence, we want to give a lower bound to $|\Theta|$, which will be provided by the assumption of this proposition, and an upper bound to $|\Theta^{\geq \kappa,M}|$, which is given by Lemma~\ref{lem_small_step} as follows.
Given $t>t_0$ (which will be fixed later) we fix $M\geq M_{0}(t)$ as in Lemma~\ref{lem_small_step}. 
Note that by definition, 
$$\Theta^{\geq\kappa,M}\subseteq C_{x}(X^{\leq M},T,\kappa).$$ 
Therefore, we can use Lemma~\ref{lem_small_step} to cover $\Theta^{\geq \kappa,M}$  by 
\[\exp\big((1-\frac{\kappa}{2}+\frac{3\log t}{t})T\big)\]
many balls in $U$ of radius $e^{-(T-2t)}$, for $T$ large enough.

\medskip

Let us now apply for $\Theta_{q} = \{u_{p/q}:\ p\in\Lambda_q\}$ and $T=\log q$, where $\Lambda_{q}\subseteq (\Z/q\Z)^{\times}$.
Note that each ball in our covering of $\Theta_{q}^{\geq\kappa,M}$ is of radius $e^{-(T-2t)}=\frac{e^{2t}}{q}$, hence can contain at most
$\lfloor 2e^{2t}\rfloor+1$ points from $\Theta_q$. Letting $\epsilon>0$, by the proposition's assumption, for all $q$ large enough we have that $|\Theta_{q}|=|\Lambda_q|\geq q^{h-\epsilon/4}$.
Therefore, if $q$ is large enough, we have
$\frac{|\Theta_{q}^{\geq\kappa,M}|}{|\Theta_{q}|}\leq q^{f(t,q,\kappa)}$
for
\[f(t,q,\kappa)=\left(1-\frac{\kappa}{2}+\frac{3\log t}{t}\right)+\frac{\log(\lfloor 2e^{2t}\rfloor+1)}{\log q}-(h-\frac{\epsilon}{4}).\]

\medskip 

Let us now prove the proposition. Since it is trivial for $h\leq \frac{1}{2}$, we may assume that $h\in(\frac{1}{2},1]$. We can then set  $\kappa_{\epsilon}=2(1-h)+\epsilon$, and note that $\kappa_{\epsilon}\in(0,1)$ if $\epsilon$ is small enough.
Given such $\epsilon$, fix $t>t_0$ large enough so that $\frac{3\log t}{t}<\frac{\epsilon}{4}$ in addition. Then
\[\lim_{q\to\infty} f(t,q,\kappa_{\epsilon})<1-h+\frac{\epsilon}{2}-\frac{\kappa_{\epsilon}}{2}=0.\]
All together, we obtain that 
\[\frac{|\Theta_{q}^{\geq\kappa_{\eps},M}|}{|\Theta_{q}|}\leq q^{f(t,q,\kappa_\epsilon)} \overset{q\to\infty}{\longrightarrow} 0.\]

Finally, note that~\eqref{Eq_est} gives in fact a lower bound for $\delta_{\Lam_{q}}^{[0,\log q ]}(X^{>M})$. Then, we deduce
\[
\liminf_{q\to\infty}\delta_{\Lambda_{q}}^{[0,\log q]}(X^{\leq M})\geq 1-\kappa_{\eps}=2h-1-\epsilon,
\] 
concluding the proof.
\end{proof}

We now prove Theorem~\ref{Thm_nonescape}.
To do so, we need to move to averaging over $[0,2\log q]$ from the $[0, \log q]$ appearing in Proposition~\ref{prop_non_escape}. 
This is done using the following symmetry that these measures enjoy.
\begin{lem}\label{lem: half symmetry}\cite[equation (2.2) page 154]{DS}
    \label{lem:step1}
    Denote by $\tau:X\to X$ the map that sends a lattice to its dual. For $p\in(\bZ/q\bZ)^\times$, let $p'\in (\bZ/q\bZ)^{\times}$ denote the unique integer satisfying  $pp'=-1$ modulo $q$. Then for any $q\in\bN$ and any $p\in (\bZ/q\bZ)^\times$,
    \begin{equation*}
        \cavg{p/q} = \frac{1}{2}\del_{p/q}^{[0,\log q]} +
        \frac{1}{2}\tau_{\ast}\del_{p'/q}^{[0,\log q]}.
    \end{equation*}    
\end{lem}

\begin{proof}[Proof of Theorem~\ref{Thm_nonescape}]
    Let $\Lam_q\subseteq (\bZ/q\bZ)^\times$ be as in the statement of the theorem. Then by Proposition~\ref{prop_non_escape}, for any $\epsilon>0$ there is a compact set $X_{\epsilon}$ so that 
    \[\liminf_{q\to\infty} \delta_{\Lambda_{q}}^{[0,\log q]}(X_{\epsilon})\geq 2h-1-\epsilon.\]
    
    Set $\Lam_q' = \{p' :\ p\in \Lam_q\}$. Then $|\Lam_q'|=|\Lam_q|$, hence we similarly have a compact set $X_{\epsilon}^{\prime}$ so that
    \[\liminf_{q\to\infty} \tau_{\ast}\delta_{\Lambda_{q}'}^{[0,\log q]}(\tau(X_{\epsilon}'))=\liminf_{q\to\infty} \delta_{\Lambda_{q}'}^{[0,\log q]}(X_{\epsilon}')\geq 2h-1-\epsilon.\]

    By Lemma~\ref{lem:step1} we have that 
    $$\del_{\Lam_q}^{[0,2\log q]} = \frac{1}{2}\del_{\Lam_q}^{[0,\log q]} + \frac{1}{2}\tau_{\ast}\del_{\Lam_q'}^{[0,\log q]}.$$
Let $Y_{\epsilon}=X_{\epsilon}\cup\tau(X_{\epsilon}^{\prime})$. 
Putting it all together, we have
\[\liminf_{q\to\infty}\delta_{\Lambda_{q}}^{[0,2\log q]}(Y_{\epsilon})\geq 2h-1-\epsilon.\]
Note that $Y_{\epsilon}$ is compact, since $X_{\epsilon}$ is compact and $\tau$ is continuous, hence it follows that any weak* limit $\mu$ of $\delta_{\Lambda_{q}}^{[0,2\log q]}$ satisfies
\[\mu(X)\geq \mu(Y_{\epsilon})\geq 2h-1-\epsilon.\]
As $\epsilon>0$ is arbitrary, the theorem follows.
\end{proof}

\section{Continued fraction expansion of rationals}\label{sec: proof of theorem 3}
In this section we deduce Theorems~\ref{thm 1 exp}-\ref{thm 2}  from Corollary~\ref{cor: main}. The first result we prove is Theorem~\ref{thm 3}, and the rest would follow with little work.  We use the
tight relationship between the $a_t$-action on $X$ and continued fraction expansions, and a bit of the ergodicity of $\mu_X$ with respect to $a_1$. We 
will use the analysis carried out in \cite[\S 4]{DS} which in turn relies on the presentation of the aforementioned relation as in the book
\cite{EW}. 

With a slight abuse of notation, for $q\in \bN$ and $p\in (\bZ/q\bZ)^{\times}$ we will let $\delta_{p/q}\defn \delta_{u_{p/q}x_0}$, where the latter is the probability measure supported on $\{u_{p/q}x_0\}$ as before.
 We will be using the following result from \cite[\S4]{DS}:
\begin{lem}[Lemma 4.11 and Theorem 4.12 from \cite{DS}]
    \label{lem: DS}
    Let $Q\subseteq \bN$ be a sequence of denominators and assume that for 
    $q\in Q$ we are given $p_q\in(\bZ/q\bZ)^\times$ satisfying 
    $\cavg{p_q/q}\to \mu_X$. Then $\frac{\on{len}(p_q/q)}{\log q} \to \frac{12 \log 2}{\pi^2}$ and for any finite string of integers $\bw=(w_1,\dots, w_k)\in \bN^k$, 
    $D_\bw(p_q/q)\to D_\bw$.
\end{lem}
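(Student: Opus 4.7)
The plan is to exploit the classical identification of the geodesic flow $(a_{t})$ on $X$ with a suspension of the natural extension of the Gauss map. Fix a compact cross-section $Y\subset X$ with roof function $\tau:Y\to\R_{>0}$ and return-invariant probability measure $\nu$, so that $\mu_{X}$ corresponds under the suspension map to $(1/\bar\tau)\,d\nu\,dt$ with $\bar\tau=\int_{Y}\tau\,d\nu=\frac{\pi^{2}}{6\log 2}$ (the Lyapunov exponent of the Gauss map). Under this identification the return map to $Y$ is conjugate to the natural extension of the Gauss map, and the return points encode the CFE tails symbolically, as in \cite{EW}. In particular, for any finite word $\bw\in\N^{k}$, there is a cylinder $C_{\bw}\subseteq Y$ with $\nu(C_{\bw})=D_{\bw}$.

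The first, essentially symbolic, step that I would extract from the analysis of~\cite[\S4]{DS} is that for a reduced rational $p/q\in(0,1)$ with $\on{len}(p/q)=n$, the orbit segment $\{a_{t}u_{p/q}x_{0}:t\in[0,2\log q]\}$ returns to $Y$ exactly $n+O(1)$ times, and its $i$-th return point lies in $C_{\bw}$ precisely when the digits of the CFE of $p/q$ at positions $i,\ldots,i+k-1$ spell $\bw$. This is the bridge between the dynamical hypothesis $\cavg{p_{q}/q}\to\mu_{X}$ and the combinatorial conclusions about $D_{\bw}(p_{q}/q)$ and $\on{len}(p_{q}/q)/\log q$.

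Given the bridge, both conclusions follow by testing the convergence $\cavg{p_{q}/q}\to\mu_{X}$ against indicators of thin flow-boxes $B_{\bw,\epsilon}=C_{\bw}\times(0,\epsilon)$ and $B_{\epsilon}=Y\times(0,\epsilon)$, where we identify these boxes with subsets of $X$ via the suspension map. The time the orbit spends in such a box during $[0,2\log q]$ equals $\epsilon$ times the number of returns into the base, up to $O(\epsilon)$ boundary effects, whereas the $\mu_{X}$-measure of the box is $\epsilon\nu(\textup{base})/\bar\tau$. Passing to the limit therefore gives
\[
\frac{n}{2\log q}\longrightarrow\frac{1}{\bar\tau}=\frac{6\log 2}{\pi^{2}},\qquad \frac{\#\{i\leq n:y_{i}\in C_{\bw}\}}{n}\longrightarrow\frac{\nu(C_{\bw})}{\nu(Y)}=D_{\bw},
\]
which after multiplying the first limit by $2$ yields $\on{len}(p_{q}/q)/\log q\to\frac{12\log 2}{\pi^{2}}$, and the second limit is exactly $D_{\bw}(p_{q}/q)\to D_{\bw}$.

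The main technical nuisance is that flow-box indicators are not continuous, so the weak-$*$ convergence must be upgraded to hold for Jordan-measurable sets. This is standard Portmanteau: one sandwiches the indicator between continuous bumps and chooses $\epsilon$ outside a countable set of bad values so that $\mu_{X}(\partial B_{\bw,\epsilon})=0$. A second minor issue is the $O(1)$ slack between the exact CFE length and the number of returns inside $[0,2\log q]$, together with the handling of the initial and terminal portions of the orbit near $0$ and near $2\log q$; both are absorbed after division by $\log q\to\infty$.
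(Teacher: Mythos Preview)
Your sketch is correct and is essentially the argument the paper invokes: the lemma is not proved in the paper but quoted from \cite[\S4]{DS}, where exactly this cross-section/suspension picture is carried out (return times to a transversal encode the CFE digits, and testing $\cavg{p_q/q}\to\mu_X$ against flow-box neighbourhoods yields both the return-count asymptotic and the cylinder frequencies). The only thing the paper adds on its own is Remark~\ref{remark:gauss_measure}, which packages the $D_\bw$-conclusion slightly differently: rather than testing the flow-box $C_{\bw}\times(0,\epsilon)$ directly, it uses the push-forward statement $\nu_{p_q/q}\to\nu_{\on{Gauss}}$ on $(0,1)$ (this is \cite[Lemma~4.11]{DS}), together with the identities $D_\bw=\nu_{\on{Gauss}}(I_\bw)$ and $D_\bw(p_q/q)=\frac{\on{len}(p_q/q)}{\on{len}(p_q/q)-k+1}\,\nu_{p_q/q}(I_\bw)$, and the already-established $\on{len}(p_q/q)\asymp\log q$. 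This avoids having to redo the Portmanteau sandwich at the level of the suspension, but is otherwise the same mechanism.
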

Here $\on{len}(p/q)$ and $D_\bw$ are defined in \eqref{eq: length of cfe} and \eqref{eq: asymptotic density of w}.
\begin{remark}\label{remark:gauss_measure}
    We note that in \cite[Lemma 4.11]{DS} there is no mention of the asymptotic densities $D_\bw, D_\bw(p/q)$. Still, the claim as we stated it follows easily; Let $I_\bw$ denote the subinterval of the unit interval consisting 
    of those numbers whose $\cfe$ starts with $\bw$. Then, in the notations of~\cite[Lemma 4.11]{DS}, it follows from the ergodicity of $\nu_{\on{Gauss}}$ with respect to the Gauss map  that 
    $D_\bw= \nu_{\on{Gauss}}(I_\bw)$ (c.f.\ the introduction of~\cite{AS}), and it follows 
    straight from the definitions that  
    $D_\bw(p_q/q) = \frac{\on{len}(p_q/q)}{\on{len}(p_q/q) - k+1}\nu_{p_q/q}(I_\bw)$. 
    Since $\nu_{p_q/q}\to\nu_{\on{Gauss}}$, we have  
    $\nu_{p_q/q}(I_\bw)\to \nu_{\on{Gauss}}(I_\bw)$. As $\on{len}(p_q/q)\asymp \log q$ we deduce that indeed
     $$D_\bw(p_q/q)\to D_\bw$$
    as claimed.
\end{remark}

Recall that in Corollary~\ref{cor: main} we showed that given the condition $\frac{\log |\Lam_q|}{\log q}\to 1$ we obtain that $\delta_{\Lambda_q}^{[0, 2\log q ]} \wstar \mu_X$. As can be seen in Lemma ~\ref{lem: DS}, we would like to consider measures $\delta_{p_q/q}^{[0,2\log q]}$ instead. This will be done by moving from averages over $\Lambda_q$ to a claim about `almost every' $p\in \Lambda_q$.
This step is actually quite general in nature. To formulate it in a more general setting, we need the following definition.
\begin{defi}\label{defi:average}
    For a finite non-empty set $\Omega$ of probability Radon measures on $X$, we denote its average by
    $$Avg(\Omega) = \frac{1}{|\Omega|} \sum_{\nu \in \Omega} \nu.$$
    We also define $Avg(\emptyset)$ to be the zero measure.
    \end{defi}
    \begin{defi}
    If $\Omega_q$ is a sequence of sets as in Definition~\ref{defi:average}, we say that they are \textbf{uniformly almost invariant} if any possible weak* limit point of $Avg(\Omega_q')$ is $a_t$-invariant,  for any choice of non-empty subsets $\Omega_q'\subseteq \Omega_q$.
\end{defi}
\begin{remark}
    The application we have in mind for this last definition is $\Omega_{q}=\{\delta_{p/q}^{[0,2\log q]}:\ p\in \Lambda_{q}\}$, so that $Avg(\Omega_{q})=\delta_{\Lambda_{q}}^{[0,2\log q]}$.
    The claim that $\Omega_{q}$ is `uniformly almost invariant' follows from the fact that each measure $Avg(\Omega_{q}')$ is a  diagonal continuous average of a probability measure, over a time interval of length $2 \log q$ which approaches infinity.
\end{remark}

We prove the following proposition, which is an application of the ergodicity of the Haar measure on $X$.

\begin{prop}
    \label{lem:step2}
    Let $\Omega_q$ be a sequence of uniform almost invariant sets such that $Avg(\Omega_q)\wstar\mu_X$. Then there exist $\Omega_q'\subseteq \Omega_q$ such that $\frac{|\Omega_q'|}{|\Omega_q|}\to 1$ and for any choice of a sequence $\nu_q \in \Omega_q'$ we have that $\nu_q \wstar\mu_X$.
\end{prop}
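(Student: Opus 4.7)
The plan is to establish the following key claim: for every $f \in C_c(X)$ and every $\epsilon > 0$,
\[
\frac{\av{\set{\nu \in \Omega_q : \av{\nu(f) - \mu_X(f)} > \epsilon}}}{\av{\Omega_q}} \longrightarrow 0 \quad \text{as } q \to \infty.
\]
Granted this claim, a standard diagonal argument over a countable family $\set{f_j}_{j\in\N} \subseteq C_c(X)$ dense in sup norm (available since $X$ is second countable) with suitably chosen sequences $\epsilon_q \to 0$ and $J_q \to \infty$ produces subsets $\Omega_q' \subseteq \Omega_q$ with $\av{\Omega_q'}/\av{\Omega_q} \to 1$ such that every $\nu_q \in \Omega_q'$ satisfies $\av{\nu_q(f_j) - \mu_X(f_j)} \leq \epsilon_q$ for all $j \leq J_q$. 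Since each $\nu_q$ is a probability measure (so $\av{\nu_q(f)} \leq \|f\|_\infty$), sup-norm approximation extends this to $\nu_q(f) \to \mu_X(f)$ for every $f \in C_c(X)$; that is, $\nu_q \wstar \mu_X$.

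To prove the key claim, I would argue by contradiction. Let $\Omega_q^{\pm} \defn \set{\nu \in \Omega_q : \pm(\nu(f) - \mu_X(f)) > \epsilon}$, and suppose, passing to a subsequence, that $\av{\Omega_q^+}/\av{\Omega_q} \geq \delta$ for some $\delta > 0$ (the case of $\Omega_q^-$ is entirely analogous). Passing to a further subsequence, assume that $\av{\Omega_q^+}/\av{\Omega_q} \to \alpha \in [\delta, 1]$ and that $Avg(\Omega_q^+)$ and $Avg(\Omega_q \smallsetminus \Omega_q^+)$ converge weak$^\ast$ to sub-probability measures $\mu^+, \mu^-$. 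By the uniform almost invariance hypothesis both $\mu^+$ and $\mu^-$ are $a_t$-invariant. Decomposing
\[
Avg(\Omega_q) = \tfrac{\av{\Omega_q^+}}{\av{\Omega_q}}\, Avg(\Omega_q^+) + \tfrac{\av{\Omega_q \smallsetminus \Omega_q^+}}{\av{\Omega_q}}\, Avg(\Omega_q \smallsetminus \Omega_q^+)
\]
and taking the weak$^\ast$ limit yields $\mu_X = \alpha\mu^+ + (1-\alpha)\mu^-$.

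Here the crucial input is the ergodicity of the geodesic flow $a_t$ on $(X, \mu_X)$. Since $\alpha\mu^+ \leq \mu_X$ as Borel measures and is $a_t$-invariant, its Radon--Nikodym derivative with respect to $\mu_X$ is $a_t$-invariant and bounded, hence $\mu_X$-a.s.\ constant; so $\alpha\mu^+ = c\,\mu_X$ for some $c \in [0,1]$, and similarly $(1-\alpha)\mu^- = (1-c)\mu_X$. The constraints $\mu^+(X) = c/\alpha \leq 1$ and $\mu^-(X) = (1-c)/(1-\alpha) \leq 1$ force $c = \alpha$, so $\mu^+ = \mu_X$. On the other hand, $Avg(\Omega_q^+)(f) \geq \mu_X(f) + \epsilon$ by construction, which yields $\mu^+(f) \geq \mu_X(f) + \epsilon$ in the weak$^\ast$ limit, contradicting $\mu^+ = \mu_X$ since $\epsilon > 0$.

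The main conceptual obstacle is that the weak$^\ast$ limits $\mu^\pm$ are only $a_t$-invariant sub-probabilities, and the geodesic flow admits many such measures (periodic orbit measures, measures with mass escaped to the cusp, etc.), so $a_t$-invariance by itself is far from forcing $\mu^\pm = \mu_X$. What saves the argument is the rigid compatibility $\alpha\mu^+ + (1-\alpha)\mu^- = \mu_X$: combined with ergodicity of $\mu_X$ this confines both $\mu^+$ and $\mu^-$ to the one-dimensional cone of $a_t$-invariant measures dominated by $\mu_X$, and the total-mass bookkeeping then pins each one to be $\mu_X$ itself.
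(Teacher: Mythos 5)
Your proposal is correct and follows essentially the same route as the paper: the same reduction to a claim about the proportion of measures $\nu\in\Omega_q$ with $\av{\nu(f)-\mu_X(f)}>\eps$, the same diagonal argument over a countable dense family in $C_c(X)$, and the same contradiction via splitting $\Omega_q$ into the deviating set and its complement, passing to weak$^\ast$ limits, invoking uniform almost invariance, and using ergodicity of $\mu_X$ in the decomposition $\mu_X=\alpha\mu^++(1-\alpha)\mu^-$. The only cosmetic difference is that you run the ergodicity step through a Radon--Nikodym derivative argument (with the harmless caveat that the division by $1-\alpha$ should be bypassed in the trivial case $\alpha=1$), whereas the paper appeals to extremality of the ergodic measure $\mu_X$ directly.
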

\begin{proof}
    We first claim that the conclusion of the proposition holds if for any $\eps>0$ and $f\in C_c(X)$, there exists an integer $q(\eps,f)\in\bN$ such that for all $q\geq q(\eps,f)$, 
    $$\frac{1}{\av{\Omega_q}}\#\set{\nu \in \Omega_q: \av{\nu(f)-\mu_X(f)}>\eps}\leq 2\eps.$$ Indeed, since $C_c(X)$ is a separable Banach space, there exists a countable dense subset $\{f_i\}_{i\in\bN}$ of $C_c(X)$. Set $q_0=1$ and for each $k\in\bN$ define $$q_k:=\max\set{q_{k-1}+1,q(1/k^2,f_i):1\leq i\leq k }.$$ It follows from the claim that for any $q\geq q_k$ and $i=1,\dots,k$,
    \eqlabel{Eq_countbound}{
    \frac{1}{\av{\Omega_q}}\#\set{\nu \in \Omega_q: \av{\nu(f_i)-\mu_X(f_i)}>\frac{1}{k^2}}\leq \frac{2}{k^2}.
    } For any $q_k \leq q< q_{k+1}$, we set $$\Om_q' = \set{\nu\in \Om_q: \av{ \nu(f_i)-\mu_X(f_i)}\leq \frac{1}{k^2}\ \forall i=1,\dots,k}.$$ By \eqref{Eq_countbound}, we have $\av{\Om_q'}\geq \left(1-\frac{2}{k}\right)\av{\Om_q}$, hence $\frac{\av{\Om_q'}}{\av{\Om_q}}\to 1$. 
    Note that  for any choice of a sequence $\nu_q \in \Omega_q'$, we have $\nu_{q}(f_i)\overset{q\to\infty}{\to}\mu_{X}(f_i)$ for all $i\in\bN$.
    Since $\{f_i\}_{i\in\bN}$ is a dense subset of $C_c(X)$, it follows that $\nu_q \wstar\mu_X$. This proves the reduction.\\

    Let us now prove the claim. Assume, for the sake of contradiction, that the claim does not hold. Then 
there exist $\eps>0$, a function $f\in C_c(X)$, and an unbounded subset $Q\subseteq \bN$ of $q$'s such that for all $q\in Q$ 
    \begin{equation}\label{eq: using ergodicity}
    \frac{1}{\av{\Omega_q}}\#\set{\nu \in \Omega_q: \av{
    \nu(f)-\mu_X(f)}>\eps}> 2\eps.
    \end{equation}
    It follows from \eqref{eq: using ergodicity} that there is an infinite subsequence 
    $Q'\subseteq Q$ such that along $Q'$, at least one of the following statements 
    hold:
    \begin{enumerate} 
    \mathitem \label{eq: say positive}
      \[\frac{1}{\av{\Omega_q}} \#\set{\nu\in \Omega_q: 
    \nu(f)>\mu_X(f) +\eps}> \eps.\] 
    \mathitem 
    \[\frac{1}{\av{\Omega_q}}\#\set{\nu\in \Omega_q: 
    \nu(f)<\mu_X(f) -\eps}> \eps.\]
    \end{enumerate}

    Let us assume without loss of generality that option \eqref{eq: say positive} holds.  For $q\in Q'$, let \[A_q = \set{\nu\in \Omega_q: \nu(f) >\mu_X(f)+\eps},\qquad  B_q = \Omega_q\sm A_q.\]
    Since $\frac{|A_q|}{|\Omega_q|}\in(\epsilon,1]$ and $\frac{|B_q|}{|\Omega_q|}=1-\frac{|A_q|}{|\Omega_q|}$ are bounded, we may take an infinite subsequence $Q''\subseteq Q'$ so that $\frac{|A_q|}{|\Omega_q|}$ and $\frac{|B_q|}{|\Omega_q|}$ converge to some numbers $\alpha, (1-\alpha)$, respectively. 
    Note that $\alpha\geq \epsilon>0$. Next, recall that the set of all Radon measures with mass at most $1$,  is sequentially compact in the weak* topology.
    Therefore, we may assume that $Q''$ was chosen so that $Avg(A_q)\wstar \nu_1$ and $Avg(B_q)\wstar \nu_2$ as $q\to \infty$ in $Q''$, for some measures $\nu_1,\nu_2$.
    As our sets $\Omega_q$ are uniformly almost invariant, these limits $\nu_1,\nu_2$ are $a_1$-invariant measures.
     
    Let us now present $Avg(\Omega_q)$ as a convex
    combination as follows:
    \begin{equation}
        \label{eq:convex combination}
        Avg(\Omega_q) =
        \frac{\av{A_q}}{\av{\Omega_q}}Avg(A_q)+\frac{\av{B_q}}{\av{\Omega_q}}Avg(B_q).
    \end{equation}
    Taking the limit over the sequence $Q''$ 
    we conclude from \eqref{eq:convex combination}, using the assumed convergence $Avg(\Omega_q)\wstar\mu_X$,
    that 
    $$\mu_X = \al\nu_1 + (1-\al)\nu_2.$$
    If $\alpha=1$, then $\mu_X=\nu_1$. Otherwise both $\alpha,1-\alpha>0$, so the ergodicity of $\mu_X$ with respect to the $a_1$-action also implies that 
    $\mu_X=\nu_1$. 
    However, in either case, by definition of $A_{q}$ we have $Avg(A_q)(f)>\mu_X(f)+\epsilon$ and hence \[\mu_X(f)=\nu_1(f)=\lim_{q\in Q''}Avg(A_q)(f)\geq \mu_X(f)+\epsilon,\]
    in contradiction.  This finishes the proof of the claim and with it the proposition.
\end{proof}

We can now turn to prove the main results of this paper concerning the continued fraction expansions of rationals. We start from Theorem~\ref{thm 3}, from which Theorem~\ref{thm 1 exp} would then easily follow. 

\begin{proof}[Proof of  Theorem~\ref{thm 3}] 
Let $\Lam_q\subseteq (\bZ/q\bZ)^\times$ be as in the statement of the theorem. By Corollary~\ref{cor: main},  $\del_{\Lam_q}^{[0, 2\log q]} \wstar \mu_X$. Hence we can apply Proposition~\ref{lem:step2} and find a set 
$\hat{\Lam}_q\subseteq \Lam_q$ of size $\frac{|\hat{\Lam}_q|}{|\Lam_q|}\to 1$, so that for any choices of $p_q\in \hat{\Lam}_q$ we have that 
$\cavg{p_q/q}\to \mu_X$.
Consequently, by Lemma~\ref{lem: DS}, for every such sequence $p_q$ we have 
that 
\begin{equation}\label{eq: lenngths converge}
\frac{\on{len}(p_q/q)}{\log q}\to \frac{12\log 2}{\pi^2},
\end{equation}
and for any finite string of integers $\bw\in \bN^k$, 
\begin{equation}\label{eq: desities converge}
D_\bw(p_q/q)\to D_\bw. 
\end{equation}

Assume by way of contradiction that for some $\eps>0$ we have
$$\bP|_{\Lam_q}(\av{\frac{\on{len}(p/q)}{\log q} - \frac{12\log 2}{\pi^2}}>\eps)\nrightarrow 0,$$
where $p\in \Lam_q$ is chosen uniformly at 
random.
This means that along a subsequence $Q\subseteq \bN$ we have 
that the set 
$$A_q = \set{p\in \Lam_q:\av{\frac{\on{len}(p/q)}{\log q} - \frac{12\log 2}{\pi^2}}>\eps}$$ occupies a positive proportion of $\Lam_q$ (that is
$\av{A_q}/\av{\Lam_q}>\eta$ for some positive $\eta$), and in particular, $A_q\cap \hat{\Lam}_q\ne \varnothing$ for all large $q$. We choose $p_q\in \hat{\Lam}_q\cap A_q$
and arrive to a contradiction arising from \eqref{eq: lenngths converge} and the definition of $A_q$. 

The argument for the asymptotic frequency is identical, hence is omitted.  
\end{proof}

Next, we deduce Theorem~\ref{thm 1 exp} from Theorem~\ref{thm 3}. We leave it to the reader to prove that the inverse deduction holds as well, so the two results are in fact equivalent.
\begin{proof}[Proof of Theorem~\ref{thm 1 exp}]
We only prove item~\ref{item:1.2.2} concerning the length of the c.f.e. The argument for the asymptotic frequency is the same, hence is omitted.
For any  $\eps>0$ and $q\in\bN$, let
\[\text{Bad}_{\eps}(q)=\left\{j\in (\bZ/q\bZ)^{\times}:\ \av{\frac{\on{len}(j/q)}{\log q} - \frac{12\log 2}{\pi^2}} >\eps\right\}.\]
Note that an equivalent formulation of the theorem is the following inequality, which we now prove: 
\eqlabel{eq:bad_eps}{\limsup_{q\to \infty}\frac{\log\big(|\text{Bad}_\epsilon (q)| / \varphi(q)\big)}{\log(q)}<0.}

Since \[\frac{\log(\varphi(q))}{\log(q)} \overset{q\to\infty}{\to} 1,\]
\eqref{eq:bad_eps} is equivalent to showing that
\[\limsup_{q\to \infty}\frac{\log(|\text{Bad}_\epsilon (q)|)}{\log(q)}<1.\]
If this inequality would not hold, we could find a strictly increasing sequence $(q_n)_{n\in\bN}$ such that 
\[\frac{\log(|\text{Bad}_{\eps}(q_n)|)}{\log q_n}{\to} 1.\]
Define $\Lam_{q}\subseteq (\Z/q\Z)^{\times}$ for all $q\in \bN$ by
\[\Lam_{q}=\begin{cases}
    \text{Bad}_{\eps}(q) & q=q_n\text{ for some $n$}\\
    (\Z/q\Z)^{\times} & \text{otherwise}.
\end{cases}\]
Then we can apply Theorem~\ref{thm 3} and deduce that
\[\bP|_{\Lam_{q}}\pa{\Lam_{q}\cap \text{Bad}_{\eps}(q)}\to 0 \textrm{ as }q\to \infty.\]
In particular,
 \eqlabel{eq probability of bad}{\bP|_{\text{Bad}_{\eps}(q_n)}\pa{\text{Bad}_{\eps}(q_n)}\to 0 \textrm{ as }n\to \infty.}
 However, the LHS in~\eqref{eq probability of bad} is identically $1$, in contradiction. \end{proof}

We end up this paper with deducing Theorem~\ref{thm 2}, namely our application for rational numbers with prime numerators.
\begin{proof}[Proof of Theorem~\ref{thm 2}]
    For any $q$, let 
    $$\Lam_q: = \set{ p\in(\bZ/q\bZ)^{\times}: p\textit{ is prime}}.$$
    We will show that 
    \begin{equation}\label{eq: 1711}
    \lim_{q\to \infty} \frac{\log\av{\Lam_q}}{\log q} = 1.
    \end{equation}
    This would allow us to apply Theorem~\ref{thm 3} with this choice of $\Lam_q$, and the resulting statement is precisely Theorem~\ref{thm 2}. 

    Equation~\eqref{eq: 1711} is just a simple consequence of the prime number theorem. We include the argument for completeness. 
    Let $\pi(q)$ denote the number of primes $\le q$. Then
    \[\pi(q) = \av{\Lam_q}+\#\set{p:p \textit{ is prime, } p|q}.\]
    Since the number of prime divisors of $q$ is bounded by $\log q$ we get that
    $ \pi(q) \le \av{\Lam_q}+\log q$. 
    In particular,
    $$\log \av{\Lam_q}\ge \log(\pi(q) - \log q) = \log (\pi(q)(1 - \frac{\log q}{\pi(q)})) \sim \log \pi(q),$$
    where the asymptotic relation follows from the fact that $\frac{\log q}{\pi(q)}\to 0$ by the prime number theorem. So, in order to show~\eqref{eq: 1711}, it is suffices to show that $\frac{\log \pi(q)}{\log q}\to 1$. Indeed, again 
    by the prime number theorem, we know that
    $\displaystyle{\lim_{q\to \infty}} \frac{\pi(q)\log q}{q} = 1.$ Taking logarithms we obtain that 
    $$ \lim_{q\to\infty}\Big( \log \pi(q)  - \log q + \log \log q \Big) = 0.$$
    Dividing by $\log q$ we see that $\frac{\log \pi(q)}{\log q}\to 1$ as needed.
\end{proof}

\def\cprime{$'$} \def\cprime{$'$} \def\cprime{$'$}
\providecommand{\bysame}{\leavevmode\hbox to3em{\hrulefill}\thinspace}
\providecommand{\MR}{\relax\ifhmode\unskip\space\fi MR }
\providecommand{\MRhref}[2]{%
  \href{http://www.ams.org/mathscinet-getitem?mr=#1}{#2}
}
\providecommand{\href}[2]{#2}


\begin{thebibliography}{KKLM17}

\bibitem[AKS81]{adler1981construction}
Roy Adler, Michael Keane, and Meir Smorodinsky, \emph{A construction of a
  normal number for the continued fraction transformation}, J. Number Theory
  \textbf{13} (1981), no.~1, 95--105. \MR{602450}

\bibitem[AS18]{AS}
Menny Aka and Uri Shapira, \emph{On the evolution of continued fractions in a
  fixed quadratic field}, J. Anal. Math. \textbf{134} (2018), no.~1, 335--397.
  \MR{3771486}

\bibitem[Bi05]{bykovski2005dispersion}
V.~A. Bykovski\u~i, \emph{An estimate for the dispersion of lengths of finite
  continued fractions}, Fundam. Prikl. Mat. \textbf{11} (2005), no.~6, 15--26.
  \MR{2204419}

\bibitem[BV05]{baladi2005}
Viviane Baladi and Brigitte Vall\'ee, \emph{Euclidean algorithms are
  {G}aussian}, J. Number Theory \textbf{110} (2005), no.~2, 331--386.
  \MR{2122613}

\bibitem[Dix70]{dixon1970number}
John~D. Dixon, \emph{The number of steps in the {E}uclidean algorithm}, J.
  Number Theory \textbf{2} (1970), 414--422. \MR{266889}

\bibitem[DS18]{DS}
Ofir David and Uri Shapira, \emph{Equidistribution of divergent orbits and
  continued fraction expansion of rationals}, J. Lond. Math. Soc. (2)
  \textbf{98} (2018), no.~1, 149--176. \MR{3847236}

\bibitem[EK12]{EK2012entropy}
Manfred Einsiedler and Shirali Kadyrov, \emph{Entropy and escape of mass for
  {${\rm SL}_3({\mathbb{ Z}})\backslash{\rm SL}_3({\mathbb{R}})$}}, Israel J.
  Math. \textbf{190} (2012), 253--288. \MR{2956241}

\bibitem[EKP15]{EKP2015escape}
M.~Einsiedler, S.~Kadyrov, and A.~Pohl, \emph{Escape of mass and entropy for
  diagonal flows in real rank one situations}, Israel J. Math. \textbf{210}
  (2015), no.~1, 245--295. \MR{3430275}

\bibitem[ELMV12]{ELMV2011distribution}
Manfred Einsiedler, Elon Lindenstrauss, Philippe Michel, and Akshay Venkatesh,
  \emph{The distribution of closed geodesics on the modular surface, and
  {D}uke's theorem}, Enseign. Math. (2) \textbf{58} (2012), no.~3-4, 249--313.
  \MR{3058601}

\bibitem[EW11]{EW}
Manfred Einsiedler and Thomas Ward, \emph{Ergodic theory with a view towards
  number theory}, Graduate Texts in Mathematics, vol. 259, Springer-Verlag
  London Ltd., London, 2011. \MR{2723325}

\bibitem[Hei69]{Heilbronn1969length}
H.~Heilbronn, \emph{On the average length of a class of finite continued
  fractions}, Number {T}heory and {A}nalysis ({P}apers in {H}onor of {E}dmund
  {L}andau), Plenum, New York, 1969, pp.~87--96. \MR{258760}

\bibitem[Hen89]{HensleyBoundedDigits}
Doug Hensley, \emph{The distribution of badly approximable numbers and
  continuants with bounded digits}, Théorie des nombres (1989), 371--385.

\bibitem[Hen90]{Hensley1990II}
\bysame, \emph{The distribution of badly approximable rationals and continuants
  with bounded digits. {II}}, J. Number Theory \textbf{34} (1990), no.~3,
  293--334. \MR{1049508}

\bibitem[Hen94]{hensley1994number}
\bysame, \emph{The number of steps in the euclidean algorithm}, J. Number
  Theory \textbf{49} (1994), no.~2, 142--182.

\bibitem[Jar29]{Jarnik1928diophantine}
Vojtĕch Jarník, \emph{Zur metrischen theorie der diophantischen
  approximationen}, Prace Matematyczno-Fizyczne \textbf{36} (1928-1929), no.~1,
  91--106 (pol).

\bibitem[KKLM17]{KKLM}
S.~Kadyrov, D.~Kleinbock, E.~Lindenstrauss, and G.~A. Margulis, \emph{Singular
  systems of linear forms and non-escape of mass in the space of lattices}, J.
  Anal. Math. \textbf{133} (2017), 253--277. \MR{3736492}

\bibitem[Mor22]{Mor2022excursions}
Ron Mor, \emph{Excursions to the cusps for geometrically finite hyperbolic
  orbifolds and equidistribution of closed geodesics in regular covers},
  Ergodic Theory Dynam. Systems \textbf{42} (2022), no.~12, 3745--3791.
  \MR{4504680}

\bibitem[Mor23]{Mor2023a}
\bysame, \emph{Bounding entropy for one-parameter diagonal flows on
  {$SL_{d}(\mathbb{R})/SL_{d}(\mathbb{Z})$} using linear functionals}, 2023,
  arXiv:2302.07122.

\bibitem[Por75]{Porter1975Heilbronn}
J.~W. Porter, \emph{On a theorem of {H}eilbronn}, Mathematika \textbf{22}
  (1975), no.~1, 20--28. \MR{498452}

\bibitem[Ust08]{Ustinov2008Solutions}
A.~V. Ustinov, \emph{On the number of solutions of the congruence {$xy\equiv
  l\pmod q$} under the graph of a twice continuously differentiable function},
  Algebra i Analiz \textbf{20} (2008), no.~5, 186--216. \MR{2492364}

\bibitem[Van16]{vandehey2016construction}
Joseph Vandehey, \emph{New normality constructions for continued fraction
  expansions}, J. Number Theory \textbf{166} (2016), 424--451. \MR{3486285}

\end{thebibliography}
\end{document}